\numberwithin{equation}{section}
\theoremstyle{plain}
\newtheorem{thm}{Theorem}[section]
\newtheorem{rem}{Remark}[section]
\newtheorem{lem}{Lemma}[section]
\newcommand{\V}{\mathbb{V}}
\newcommand{\E}{\mathbb{E}}
\newcommand{\dR}{\mathbb{R}}
\newcommand{\dE}{\mathbb{E}}
\newcommand{\cA}{\mathcal{A}}
\newcommand{\cB}{\mathcal{B}}
\newcommand{\cN}{\mathcal{N}}
\newcommand{\cO}{\mathcal{O}}
\newcommand{\cP}{\mathcal{P}}
\newcommand{\cF}{\mathcal{F}}
\newcommand{\rI}{\mathrm{I}}
\newcommand{\veps}{\varepsilon}
\newcommand{\wh}{\widehat}
\newcommand{\ind}{\mbox{1}\kern-.25em \mbox{I}}
\font\calcal=cmsy10 scaled\magstep1
\def\build#1_#2^#3{\mathrel{\mathop{\kern 0pt#1}\limits_{#2}^{#3}}}
\def\liml{\build{\longrightarrow}_{}^{{\mbox{\calcal L}}}}
\def\limp{\build{\longrightarrow}_{}^{{\mbox{\calcal P}}}}
\def\videbox{\mathbin{\vbox{\hrule\hbox{\vrule height1ex \kern.5em
\vrule height1ex}\hrule}}}
\email{Bernard.Bercu@math.u-bordeaux1.fr}
\email{Nguyen@math.unistra.fr}
\email{Jerome.Saracco@math.u-bordeaux1.fr}
\keywords{Semi-parametric regression, recursive estimation, Nadaraya-Watson estimator, Sliced inversion regression}
\subjclass[2000]{Primary: 62H12 Secondary: 62G05, 60F05, 62L12}
\begin{document}
\title[On the asymptotic behavior of the Nadaraya-Watson estimator]
{On the asymptotic behavior of the Nadaraya-Watson estimator associated with the recursive SIR method}
\author{Bernard Bercu}
	\address{Universit\'e de Bordeaux, Institut de Math\'ematiques de Bordeaux,
	UMR CNRS 5251, 351 cours de la lib\'eration, 33405 Talence cedex, France.}
\author{Thi Mong Ngoc Nguyen}
\address{Universit\'e de Strasbourg, Institut de Recherche Math\'ematique Avanc\'ee, UMR CNRS 7501, 7 rue Ren\'e Descartes 67084 Strasbourg cedex, France.}
\author{Jerome Saracco}
\address{Universit\'e de Bordeaux, Institut de Math\'ematiques de Bordeaux,
	UMR CNRS 5251, 351 cours de la lib\'eration, 33405 Talence cedex, France.}
%\thanks{This work has been supported by INRIA Bordeaux Sud-Ouest.}

\begin{abstract}
We investigate the asymptotic behavior of the Nadaraya-Watson estimator for the estimation of the regression function 
in a semiparametric regression model. On the one hand, we make use of the recursive version of the 
sliced inverse regression method for the estimation of the unknown parameter of the model. 
On the other hand, we implement a recursive Nadaraya-Watson procedure 
for the estimation of the regression function which takes into account the previous estimation of the parameter of the semiparametric regression
model. We establish the almost sure convergence as well as the asymptotic normality for our Nadaraya-Watson estimator. 
We also illustrate our semiparametric estimation procedure on simulated data. 
\end{abstract}

\maketitle

%%%%%%%%%%%%%%%%%%%%%%%%%%%%%%%%%%%%%%%%%%%%%%%%%%%%%%%%%%%%%%%%%%%%%%%%%%%%%%%%%%%%%%%%%%%%%%%%%%%%%%%%%%%

\section{INTRODUCTION}

%%%%%%%%%%%%%%%%%%%%%%%%%%%%%%%%%%%%%%%%%%%%%%%%%%%%%%%%%%%%%%%%%%%%%%%%%%%%%%%%%%%%%%%%%%%%%%%%%%%%%%%%%%%

The goal of this paper is to investigate the asymptotic behavior of the Nadaraya-Watson estimator
of the regression function $f$ in the semiparametric regression model given, for all $n\geq 1$, by
\begin{equation}
\label{SEMPAR}
Y_{n}=f(\theta^{\prime} X_{n})+\veps_{n}
\end{equation}
where $(X_n)$ is a sequence of independent and identically distributed random vectors of $\dR^p$
and the driven noise $(\veps_n)$ is a real martingale difference sequence independent
of $(X_n)$. We assume in all the sequel that the unknown $p$-dimensional parameter $\theta \neq 0$.
On the one hand, we make use of the recursive version of the sliced inverse regression (SIR) method, 
originally proposed by Li \cite{LI91} and Duan and Li \cite{DUAN91}, in order to estimate $\theta$.  
On the other hand, we estimate the unknown regression function $f$ via a
recursive Nadaraya-Watson estimator which takes into account the previous estimation 
of the parameter $\theta$. Our purpose is precisely to investigate the asymptotic behavior
of the recursive Nadaraya-Watson estimator of $f$.
\ \vspace{1ex} \\
One can find a wide range of literature on nonparametric estimation of a regression function. We refer the reader to
\cite{DEVLUG01}, \cite{NAD89}, \cite{SIL86}, \cite{TSY04} for some excellent books on density and regression function estimation.
In the classical situation without any parameter $\theta$, the almost sure convergence of the Nadaraya-Watson estimator 
\cite{NAD64}, \cite{WAT64} was proved by Noda \cite{NOD76} and its asymptotic normality  was established by
Schuster \cite{SCH72}. Moreover, Choi, Hall and Rousson \cite{CHR00} propose three data-sharpening versions of 
the Nadaraya-Watson estimator in order to reduce the asymptotic variance in the central limit theorem.
In our situation, we propose to make use of a recursive Nadaraya-Watson estimator \cite{DUF97} of $f$ which
takes into account the previous estimation of the parameter $\theta$. It is given, for all $x\in \dR^p$, by
\begin{equation}
\label{RNW}
\widehat{f}_{n}(x)=\frac{\sum_{k=1}^{n} W_{k}(x)Y_{k}}{\sum_{k=1}^{n} W_{k}(x)} 
\end{equation}
with
$$
W_{n}(x)=\frac{1}{h_{n}}K\Bigl(\frac{x-\widehat{\theta}_{n-1}^{\,\prime}X_{n}}{h_{n}}\Bigr)
$$
where the kernel $K$ is a chosen probability density function and  
the bandwidth $(h_n)$ is a sequence of positive real numbers  
decreasing to zero, such that $n h_n$ tends to infinity. For the sake of simplicity, we propose to make use of
$h_n = 1/n^{\alpha}$ with $\alpha \in\, ]0,1[$. 
The main difficulty arising here is that we have to deal with the recursive SIR estimator 
$\widehat{\theta}_{n}$ of $\theta$ inside the kernel $K$. 
\ \vspace{1ex}\\ 
The paper is organized as follows. Section $2$ is devoted to the recursive SIR estimator $\widehat{\theta}_{n}$.
Our main results on the asymptotic behavior of $\widehat{f}_{n}$ are given in Section $\!3$.
Under standard regularity assumptions on the kernel $K$,
we establish the almost sure pointwise convergence of $\widehat{f}_{n}$ together with its asymptotic normality.
Section $\!4$ contains some numerical experiments on simulated data, illustrating
the good performances of our semiparametric estimation procedure. 
All the technical proofs are postponed in Appendices A and B.

%%%%%%%%%%%%%%%%%%%%%%%%%%%%%%%%%%%%%%%%%%%%%%%%%%%%%%%%%%%%%%%%%%%%%%%%%%%%%%%%%%%%%%%%%%%%%%%%%%%%%%%%%%

\section{ON THE RECURSIVE SIR METHOD}

%%%%%%%%%%%%%%%%%%%%%%%%%%%%%%%%%%%%%%%%%%%%%%%%%%%%%%%%%%%%%%%%%%%%%%%%%%%%%%%%%%%%%%%%%%%%%%%%%%%%%%%%%%

From the seminal work of Li \cite{LI91} and Duan and Li \cite{DUAN91} devoted to the SIR theory, we know 
that the eigenvector associated with the maximum eigenvalue of the matrix 
$\Sigma^{-1}\Gamma$  is collinear with $\theta$ where $\Sigma=\V(X_n)$ is positive definite, $\Gamma=\V(\E(X_n|T(Y_{n}))$ and 
$T$ is a slicing of the range of $Y_{n}$ into $H$ non overlapping slices $s_1,\cdots,s_H$. 
One can observe that since  the link function $f$ is unknown in the semiparametric regression model \eqref{SEMPAR}, 
the  parameter $\theta$ is not entirely identifiable. Only its direction can be identified without assuming  additional constraints. 
Li \cite{LI91} called  effective dimension reduction (EDR),  any direction collinear with $\theta$.
Moreover, the SIR theory mainly relies on the so-called linear condition  (LC) which imposes that for all $b\in \dR^p$, 
$\E[b^{\prime}X_n | \theta^{\prime}X_n]$ is linear in $\theta^{\prime}X_n$. It means that one can find
$\alpha, \beta \in \dR$ such that
$$\E[b^{\prime}X_n | \theta^{\prime}X_n]= \alpha + \beta \theta^{\prime}X_n. \leqno (\text{LC}) $$ 
This condition is required to only hold  for the true parameter $\theta$. Since $\theta$ is unknown, it is not possible in practice to verify it
a priori. Hence, we can assume that (LC) holds for all possible values of $\theta$, which is equivalent to elliptical
symmetry of the distribution of the identically distributed sequence $(X_n)$. Finally, Hall and Li \cite{HALL93} 
mentioned that (LC) is not a severe restriction because  (LC) holds to a good
approximation in many problems as the dimension $p$ of the regression vector $X_n$ increases. 
Chen and Li \cite{CHEN98} or Cook and Ni
\cite{COOK05} also provide interesting discussions on the linear condition.
\medskip

In order to obtain a recursive version of an EDR direction estimated with SIR approach, we need  an analytic expression of the
maximum eigenvector of $\Sigma^{-1}\Gamma$. It is easily tractable when the range of $Y_{n}$ is divided into 
two non overlapping slices $s_1$ and $s_2$. Hereafter we shall assume that $H=2$. 
In this special case, it is not hard to see that 
$\Gamma=p_1z_1+p_2z_2$ where $p_h=P(Y_{n}\in s_h)$ and $z_h=\E[X_n|Y_{n}\in s_h]-\E[X_n]$ with $p_h \neq 0$ for $h=1, 2$. 
Moreover, it is straightforward to show that  the eigenvector associated to the maximum eigenvalue of 
$\Sigma^{-1}\Gamma$  can be written as 
$$\widetilde{\theta}=\Sigma^{-1}(z_1-z_2).$$ 
This vector $\widetilde{\theta}$ is therefore  an EDR direction. 
For the sake of simplicity, we identify in all the sequel the EDR direction $\widetilde{\theta}$ with $\theta$.
Our purpose is now to propose an estimator of the EDR direction $\theta$. First of all, let us recall the non recursive
SIR estimator $\widetilde{\theta}_n$ of $\theta$ given by Nguyen and Saracco \cite{NGUYEN10}. The estimator
$\widetilde{\theta}_n$ can be easily obtained  from the  sample $(X_1,Y_1),\ldots, (X_n,Y_n)$ by substituting the theoritical moments by their sample 
couterparts. More precisely, $\widetilde{\theta}_n$ is given by
\begin{equation}
\label{NONRECSIR}
\widetilde{\theta}_n=\Sigma^{-1}_n(z_{1,n}-z_{2,n})
\end{equation}
 where 
\begin{equation}
\label{SIGMAN}
\Sigma_n =\frac{1}{n}\sum_{k=1}^{n}(X_k- \overline{X}_n)(X_k- \overline{X}_n)^{\prime},
\hspace{1cm}
\overline{X}_n =\frac{1}{n}\sum_{k=1}^{n} X_k
\end{equation}
and, for $h=1,2$, $z_{h,n} = m_{h,n} -\overline{X}_n$ where
\begin{equation}
\label{MHN}
m_{h,n}=\frac{1}{n_{h,n}}\sum_{k=1}^n X_k\rI_{\{Y_{k}\in {s_h} \}},
\hspace{1cm}
n_{h,n}= \sum_{k=1}^{n} \rI_{\{Y_{k}\in {s_h} \}}.
\end{equation}
Next, we focus our attention on the recursive SIR estimator $\wh{\theta}_n$ of $\theta$ proposed by Bercu, Nguyen and Saracco 
\cite{BERCU12}, \cite{NGUYEN10}. We split the sample into two parts: the subsample of the first $(n-1)$ observations  
$(X_1,Y_1),\ldots, (X_{n-1},Y_{n-1})$, and the new observation $(X_{n}, Y_{n})$. On the one hand, the inverse of the matrix
$\Sigma_n$ given by \eqref{SIGMAN} may be recursively calculated via the Riccati equation \cite{DUF97},
\begin{equation}
\label{SIGMAN}
\Sigma_n^{-1} =\frac{n}{n-1}\Sigma_{n-1}^{-1} - \frac{n}{(n-1)(n+\rho_{n})}\Sigma^{-1}_{n-1}\Phi_{n} \Phi^{\prime}_{n} \Sigma_{n-1}^{-1}  
\end{equation}  
where   $\rho_{n}  = \Phi^{\prime}_{n}  \Sigma^{-1}_{n-1}\Phi_{n} $ and $\Phi_n=X_n- \overline{X}_{n-1}$.
On the other hand, we can also obtain the recursive form of $z_{h,n}$. As a matter of fact, we have for $h=1,2$,
\begin{equation}
\label{ZHN}
z_{h,n} =  \left \{
 \begin{array}{ll}
  z_{h^*,n-1} -  {\displaystyle \frac{1}{n}\Phi_n +  \frac{1}{n_{h^*,n-1}+1}\Phi_{h^*,n}} & \ \text{if}\ h=h^*, \vspace{1ex}\\
  z_{h,n-1} -  {\displaystyle\frac{1}{n}\Phi_n}  & \mbox{otherwise,}
   \end{array}  
  \right.
  \end{equation}
where $h^*$ denotes the slice containing the observation $Y_{n}$ and $\Phi_{h^*,n} =X_n-m_{h^*,n-1}$.
We deduce from \eqref{SIGMAN} and \eqref{ZHN} that the recursive SIR estimator $\wh{\theta}_n$ is given by
\begin{equation}  
\label{RECSIR}  
\begin{array}{ccl}
\wh{\theta}_n &= & {\displaystyle \left( \frac{n}{n-1}\right)\wh{\theta}_{n -1} 
-\frac{n}{(n-1)(n+\rho_{n})}\Sigma^{-1}_{n-1}\Phi_{n} \Phi^{\prime}_{n}\wh{\theta}_{n -1} } \vspace{1ex}\\
& - &{\displaystyle \frac{(-1)^{h^*} \,n}{(n_{h^*,n-1}+1)(n-1)}\left(\Sigma^{-1}_{n-1}- 
\frac{1}{n+\rho_{n}}\Sigma^{-1}_{n-1}\Phi_{n} \Phi^{\prime}_{n} \Sigma_{n-1}^{-1} \right)\Phi_{h^*,n}}.
\end{array}
\end{equation}

The SIR estimators $\widetilde{\theta}_n$ and $\wh{\theta}_n$ share the same asymptotic properties, 
previously established in \cite{NGUYEN10}, under the following classical hypothesis.
\vspace{1ex}
\begin{displaymath}
\begin{array}{ll}
(\text{H}_1) & \textrm{The random vectors $(X_n)$ are square integrable, independent and identically} \\
& \textrm{distributed and $(X_1,Y_1),\ldots, (X_n,Y_n)$
are independently drawn from \eqref{SEMPAR}.}
 \end{array}
\end{displaymath}

\begin{lem}
\label{LEMLGNSIR}
Assume that $(\text{LC}\,)$ and $(\text{H}_1)$ hold. Then, $\wh{\theta}_n$ converges a.s. to $\theta$,
\begin{equation}
\label{LGNSIR}
|| \wh{\theta}_n- \theta ||^2 = \mathcal{O} \left(\frac{ \log(\log n)}{n}\right)\hspace{1cm}\text{a.s.} 
\end{equation} 
In addition, we also have the asymptotic normality
\begin{equation}\label{CLTSIR}  
\sqrt{n}(\wh{\theta}_{n} - \theta)
\liml \cN (0,\Delta)
\end{equation}  
where the limiting covariance matrix $\Delta$ may be explicitely calculated.
\end{lem}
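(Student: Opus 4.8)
The plan is to first reduce the recursive estimator to the non-recursive one, and then prove both claims for the latter. Indeed, the Riccati recursion \eqref{SIGMAN} is exactly the Sherman--Morrison update of the empirical precision matrix, so the $\Sigma_n^{-1}$ produced recursively coincides with the inverse of $\Sigma_n$; similarly, writing $\overline{X}_n = \overline{X}_{n-1} + \tfrac{1}{n}\Phi_n$ and $m_{h^*,n} = m_{h^*,n-1} + \tfrac{1}{n_{h^*,n-1}+1}\Phi_{h^*,n}$ with $\Phi_{h^*,n} = X_n - m_{h^*,n-1}$ shows that the update \eqref{ZHN} reproduces $z_{h,n} = m_{h,n} - \overline{X}_n$ exactly. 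Consequently $\wh{\theta}_n = \wt{\theta}_n = \Sigma_n^{-1}(z_{1,n} - z_{2,n})$ for every $n$, and it suffices to establish \eqref{LGNSIR} and \eqref{CLTSIR} for the explicit plug-in estimator $\wt{\theta}_n$ of \eqref{NONRECSIR}.

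For the almost sure rate, I would control each empirical moment separately. By the law of the iterated logarithm for i.i.d. square-integrable vectors, $\overline{X}_n = \E[X_1] + \mathcal{O}(\sqrt{\log(\log n)/n})$ a.s., and writing $\Sigma_n = \tfrac{1}{n}\sum_{k=1}^n X_k X_k^{\prime} - \overline{X}_n\overline{X}_n^{\prime}$ gives the same rate for $\Sigma_n - \Sigma$. The strong law applied to the indicators yields $n_{h,n}/n \to p_h > 0$ a.s., so $m_{h,n}$ is eventually well defined; expressing $m_{h,n}$ as the ratio of the i.i.d. averages $\tfrac{1}{n}\sum_{k=1}^n X_k \rI_{\{Y_k \in s_h\}}$ and $n_{h,n}/n$ then gives $z_{h,n} = z_h + \mathcal{O}(\sqrt{\log(\log n)/n})$ a.s. Since $\Sigma$ is positive definite, the identity $\Sigma_n^{-1} - \Sigma^{-1} = -\Sigma_n^{-1}(\Sigma_n - \Sigma)\Sigma^{-1}$ transfers the rate to the precision matrix. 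Decomposing
\begin{equation*}
\wt{\theta}_n - \theta = (\Sigma_n^{-1} - \Sigma^{-1})(z_{1,n} - z_{2,n}) + \Sigma^{-1}\bigl((z_{1,n} - z_1) - (z_{2,n} - z_2)\bigr)
\end{equation*}
and bounding each factor through the product rule for these rates yields $|| \wt{\theta}_n - \theta ||^2 = \mathcal{O}(\log(\log n)/n)$ a.s., which is \eqref{LGNSIR}.

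For the asymptotic normality, I would collect the relevant sample moments into a single vector $M_n$ whose coordinates are $\overline{X}_n$, $\tfrac{1}{n}\sum_{k=1}^n X_k X_k^{\prime}$, and, for $h=1,2$, $\tfrac{1}{n}\sum_{k=1}^n X_k \rI_{\{Y_k\in s_h\}}$ and $n_{h,n}/n$. Each coordinate is an average of i.i.d. terms, so the multivariate central limit theorem gives $\sqrt{n}(M_n - M) \liml \cN(0,C)$ for an explicit covariance $C$. The estimator is a fixed smooth function $g$ of these moments, $\wt{\theta}_n = g(M_n)$, and $g$ is differentiable at the limit $M$ precisely because $\Sigma$ is invertible and $p_1, p_2 \neq 0$. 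The delta method then yields \eqref{CLTSIR} with $\Delta = J\,C\,J^{\prime}$, where $J$ is the Jacobian of $g$ at $M$. The main obstacle is this last computation rather than any conceptual difficulty: one must differentiate through the matrix inversion and through the ratios defining $m_{1,n}$ and $m_{2,n}$, and then assemble $J$ and $C$ into the closed form of $\Delta$. This bookkeeping is carried out along the lines of Nguyen and Saracco \cite{NGUYEN10}.
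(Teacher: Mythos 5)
First, for context: the paper contains no proof of this lemma at all --- it is imported verbatim from Nguyen and Saracco \cite{NGUYEN10} (see also \cite{BERCU12}), with the remark that $\wt{\theta}_n$ and $\wh{\theta}_n$ ``share the same asymptotic properties.'' Your opening reduction makes that remark precise and is correct: the Riccati update and the update \eqref{ZHN} are exact algebraic identities (Sherman--Morrison plus the usual mean/slice-mean recursions), so \eqref{RECSIR} is nothing but \eqref{NONRECSIR} computed recursively, and $\wh{\theta}_n=\wt{\theta}_n$ for every $n$ given a common initialization. Your overall route for $\wt{\theta}_n$ --- law of the iterated logarithm for the empirical moments, then the delta method for the smooth map $(\Sigma,z_1,z_2)\mapsto\Sigma^{-1}(z_1-z_2)$ --- is also the standard one and is essentially what the cited references do.

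There is, however, a concrete gap in the moment bookkeeping. Hypothesis $(\text{H}_1)$ gives only $\E\|X_1\|^2<\infty$, and that suffices for the coordinates of your moment vector built from first moments ($\overline{X}_n$, $\tfrac{1}{n}\sum_{k=1}^n X_k\rI_{\{Y_k\in s_h\}}$, $n_{h,n}/n$). But your claim that ``writing $\Sigma_n=\tfrac{1}{n}\sum_{k=1}^n X_kX_k^{\prime}-\overline{X}_n\overline{X}_n^{\prime}$ gives the same rate for $\Sigma_n-\Sigma$'' applies the Hartman--Wintner LIL to the entries of $X_kX_k^{\prime}$, and the same issue recurs in the CLT step where $\tfrac{1}{n}\sum_{k=1}^n X_kX_k^{\prime}$ is a coordinate of $M_n$: both require those entries to be square integrable, i.e.\ $\E\|X_1\|^4<\infty$, which $(\text{H}_1)$ does not provide. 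This is not a removable technicality. Take $(X_n)$ i.i.d.\ multivariate Student with $3$ degrees of freedom: the distribution is elliptically symmetric, so $(\text{LC})$ holds for every direction, and it is square integrable, so $(\text{H}_1)$ holds; yet the entries of $X_1X_1^{\prime}$ have infinite variance, $\sqrt{n}(\Sigma_n-\Sigma)$ does not converge in law, and $\Sigma_n-\Sigma$ fluctuates at a polynomial order (roughly $n^{-1/3}$) much larger than $\sqrt{\log(\log n)/n}$, so the term $\Sigma_n^{-1}(\Sigma_n-\Sigma)\theta$ in your decomposition ruins both the rate \eqref{LGNSIR} and the normality \eqref{CLTSIR}. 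In other words, the steps you assert for $\Sigma_n$ are false under the stated hypotheses; a self-contained proof must add $\E\|X_1\|^4<\infty$ (or argue why it can be avoided, which it cannot). In fairness, this defect is inherited from the lemma as stated --- the references it is quoted from prove these results under higher-order moment assumptions --- but since your write-up explicitly derives the covariance rate from square integrability alone, the gap is in your proof as written, not merely in the statement.
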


%%%%%%%%%%%%%%%%%%%%%%%%%%%%%%%%%%%%%%%%%%%%%%%%%%%%%%%%%%%%%%%%%%%%%%%%%%%%%%%%%%%%%%%%%%%%%%%%%%%%%%%%%%

\section{MAIN RESULTS}

%%%%%%%%%%%%%%%%%%%%%%%%%%%%%%%%%%%%%%%%%%%%%%%%%%%%%%%%%%%%%%%%%%%%%%%%%%%%%%%%%%%%%%%%%%%%%%%%%%%%%%%%%%

Our purpose is to investigate the asymptotic properties of the recursive Nadaraya-Watson estimator
$\widehat{f}_{n}$ of the link function $f$ given by \eqref{RNW}.
First of all, we assume that the kernel $K$ is a positive symmetric function, bounded with compact support, 
twice differentiable with bounded derivatives, satisfying  
$$ \int_{\dR} K(x)\,dx = 1
\hspace{1cm}\text{and}\hspace{1cm}
\int_{\dR} K^2(x)\,dx= \nu^2.
$$
Moreover, it is necessary to add the following standard hypothesis.
\vspace{1ex}
\begin{displaymath}
\begin{array}{ll}
(\text{H}_2) & \textrm{The probability density function $g$ associated with $(X_n)$ is continuous, posi-}\\
& \textrm{tive on all $\dR^p$, twice differentiable with bounded derivatives}. \\
(\text{H}_3) & \textrm{The link function $f$ is Lipschitz}. 
 \end{array}
\end{displaymath}

\noindent
Our first result deals with the almost sure convergence of the estimator $\wh{f}_{n}$.

\begin{thm}
\label{THMASCVG}
Assume that $(\text{LC}\,)$ and $(\text{H}_1)$ to $(\text{H}_3)$ hold.
In addition, suppose that the sequence $(X_{n})$ has a finite moment of order $a>2$.
Then, for any $x\in\dR$, we have 
\begin{equation} 
\label{ASCVGFN}
\lim_{n\rightarrow \infty}
\wh{f}_{n}(x)=f(x)\hspace{1cm}\text{a.s.}
\end{equation}
More precisely, if the bandwidth $(h_n)$ is given by $h_n = 1/n^{\alpha}$ with 
$0<\alpha <1/3$, 
\begin{equation} 
\label{RATEASCVGFN1}
\wh{f}_{n}(x)-f(x)=\mathcal{O}\left(n^{-\alpha}\right)+
\mathcal{O}\Bigl(n^{1/a}\sqrt{\frac{\log(\log n)}{n}}\Bigr)
\hspace{1cm}\text{a.s.}
\end{equation}
while, if $1/3\leq\alpha<1$,
\begin{equation} 
\label{RATEASCVGFN2}
\wh{f}_{n}(x)-f(x)=\mathcal{O}\left(\sqrt{n^{\alpha-1}}\log n\right)+
\mathcal{O}\Bigl(n^{1/a}\sqrt{\frac{\log(\log n)}{n}}\Bigr)
\hspace{1cm}\text{a.s.}
\end{equation}
\end{thm}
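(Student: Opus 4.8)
The plan is to isolate the extra randomness coming from the estimated direction $\wh{\theta}_{k-1}$ by comparing each weight $W_k(x)$ with its oracle counterpart
$$W_k^{\theta}(x)=\frac{1}{h_k}K\Bigl(\frac{x-\theta^{\prime}X_k}{h_k}\Bigr),$$
built from the true parameter. Writing $D_n(x)=\sum_{k=1}^n W_k(x)$ and using $Y_k=f(\theta^{\prime}X_k)+\veps_k$, I would start from the exact decomposition
$$\wh{f}_n(x)-f(x)=\frac{1}{D_n(x)}\sum_{k=1}^n W_k(x)\bigl(f(\theta^{\prime}X_k)-f(x)\bigr)+\frac{1}{D_n(x)}\sum_{k=1}^n W_k(x)\veps_k=:B_n(x)+N_n(x),$$
so that a bias-type term $B_n$ and a martingale noise term $N_n$ can be treated separately once the denominator is understood.

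First I would establish that $D_n(x)/n$ converges a.s. to the value $g_\theta(x)$ of the density of $\theta^{\prime}X_n$ at $x$, which is continuous and positive under $(\text{H}_2)$. Let $\cF_n=\sigma(X_1,\ldots,X_{n+1},\veps_1,\ldots,\veps_n)$, a filtration for which $W_k(x)$ is $\cF_{k-1}$-measurable. The oracle average $\frac1n\sum_{k=1}^n W_k^{\theta}(x)$ is handled by the classical kernel argument: its mean tends to $g_\theta(x)$ by Bochner's lemma (continuity of $g_\theta$ and $h_k\to0$), while its centred part is a sum of independent terms with $\V(W_k^{\theta}(x))=\cO(h_k^{-1})=\cO(k^{\alpha})$, so Kolmogorov's criterion $\sum_k k^{\alpha}/k^2<\infty$ (valid since $\alpha<1$) forces a.s. convergence to $0$. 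The crucial point is the correction $D_n(x)-\sum_k W_k^{\theta}(x)$: although the naive Lipschitz bound on $K$ costs a factor $h_k^{-2}$, conditioning on $\cF_{k-1}$ smooths the kernel, so that $\E[\,|W_k(x)-W_k^{\theta}(x)|\mid\cF_{k-1}]=\cO(\|\wh{\theta}_{k-1}-\theta\|)$; Lemma \ref{LEMLGNSIR} then gives $\frac1n\sum_k\|\wh{\theta}_{k-1}-\theta\|=\cO(\sqrt{\log\log n/n})\to0$, the residual fluctuation being absorbed through its quadratic variation. Hence $D_n(x)/n\to g_\theta(x)$ a.s.

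For $B_n(x)$ I would exploit the compact support of $K$: $W_k(x)\neq0$ forces $|x-\wh{\theta}_{k-1}^{\prime}X_k|\le Ch_k$, whence by the triangle inequality and the Lipschitz property $(\text{H}_3)$,
$$|f(\theta^{\prime}X_k)-f(x)|\le L\bigl(Ch_k+\|\wh{\theta}_{k-1}-\theta\|\,\|X_k\|\bigr)$$
on the support. Summing against $W_k(x)$, dividing by $D_n(x)\sim n\,g_\theta(x)$, and using $\sum_k h_k/n\sim n^{-\alpha}$ yields the term $\cO(n^{-\alpha})$, while the moment assumption $\E\|X_n\|^{a}<\infty$ gives, via Borel--Cantelli, $\max_{k\le n}\|X_k\|=\cO(n^{1/a})$ a.s.; combined with $\frac1n\sum_k\|\wh{\theta}_{k-1}-\theta\|=\cO(\sqrt{\log\log n/n})$ from Lemma \ref{LEMLGNSIR}, the second contribution is $\cO(n^{1/a}\sqrt{\log\log n/n})$. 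This produces exactly the two error terms of \eqref{RATEASCVGFN1}.

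Finally, $M_n(x)=\sum_{k=1}^n W_k(x)\veps_k$ is a martingale adapted to $(\cF_n)$ since $\E[\veps_k\mid\cF_{k-1}]=0$, with predictable quadratic variation $\langle M(x)\rangle_n=\sum_{k=1}^n W_k^2(x)\,\E[\veps_k^2\mid\cF_{k-1}]$. As above $\sum_k W_k^2(x)\sim\nu^2 g_\theta(x)\sum_k h_k^{-1}\sim C\,n^{1+\alpha}$, so a strong law for martingales gives $M_n(x)=\cO(\sqrt{n^{1+\alpha}}\log n)$ a.s., and dividing by $D_n(x)\sim n\,g_\theta(x)$ yields $N_n(x)=\cO(\sqrt{n^{\alpha-1}}\log n)$. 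Comparing exponents, $\sqrt{n^{\alpha-1}}\log n=o(n^{-\alpha})$ precisely when $\alpha<1/3$, so $N_n$ is swallowed by the bias term in that range, giving \eqref{RATEASCVGFN1}, whereas for $1/3\le\alpha<1$ it dominates the $\cO(n^{-\alpha})$ bias and gives \eqref{RATEASCVGFN2}; since all three exponents are negative for every $\alpha\in\,]0,1[$ and $a>2$, the a.s. convergence \eqref{ASCVGFN} follows. The hard part throughout is the presence of $\wh{\theta}_{k-1}$ inside the kernel: the naive bound loses a factor $h_k^{-2}$, so the delicate step is to recover the true order of the perturbation by conditioning first (to smooth the kernel) and then controlling the remaining fluctuation through its quadratic variation, carefully balancing the SIR rate of Lemma \ref{LEMLGNSIR}, the moment bound $\cO(n^{1/a})$, and the bandwidth powers $h_k^{-1}$ and $h_k^{-2}$.
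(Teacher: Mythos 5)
Your proposal follows the same skeleton as the paper's proof: the identical decomposition of $\wh{f}_n(x)-f(x)$ into a bias term and a martingale noise term over the denominator, the denominator shown to be $\sim n\,h(\theta,x)$ where $h(\theta,\cdot)$ is the density of $\theta^{\prime}X_1$, the bias handled via compact support of $K$ plus the Lipschitz hypothesis, the perturbation term via $\max_{k\leq n}\|X_k\|=\cO(n^{1/a})$ combined with the rate of Lemma \ref{LEMLGNSIR}, the noise via the strong law for martingales, and the same exponent comparison at $\alpha=1/3$. The one place where you genuinely depart from the paper --- treating the denominator by comparison with the oracle weights $W_k^{\theta}(x)$ --- is exactly where the argument breaks. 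The claim you single out as the crucial point, namely $\dE[\,|W_k(x)-W_k^{\theta}(x)|\mid\cF_{k-1}]=\cO(\|\wh{\theta}_{k-1}-\theta\|)$, is false. Conditionally on the past, the two kernels are supported on slabs $\{|x-\wh{\theta}_{k-1}^{\,\prime}u|\leq Ah_k\}$ and $\{|x-\theta^{\prime}u|\leq Ah_k\}$ of width $\cO(h_k)$, shifted relative to one another by a length of order $\|\wh{\theta}_{k-1}-\theta\|\,\|u\|$. When $\|\wh{\theta}_{k-1}-\theta\|\gg h_k$ these slabs are essentially disjoint, so the conditional $L^1$ distance saturates at $\dE[W_k\mid\cF_{k-1}]+\dE[W_k^{\theta}\mid\cF_{k-1}]\approx 2h(\theta,x)$, a constant; the correct general bound is $\cO\bigl(\min(1,\|\wh{\theta}_{k-1}-\theta\|/h_k)\bigr)$, i.e.\ your estimate is off by a factor $h_k^{-1}$. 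Since Lemma \ref{LEMLGNSIR} only gives $\|\wh{\theta}_{k-1}-\theta\|\approx\sqrt{\log\log k/k}$ while $h_k=k^{-\alpha}$, the ratio $\|\wh{\theta}_{k-1}-\theta\|/h_k$ diverges for every $\alpha>1/2$, so no version of your estimate can cover the full range $0<\alpha<1$ of the theorem (for $\alpha<1/2$ the corrected bound still sums to $o(n)$, but not at the rate you state).

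The repair is to move the difference outside the conditional expectation: what the argument actually needs is the compensator difference $\dE[W_k\mid\cF_{k-1}]-\dE[W_k^{\theta}\mid\cF_{k-1}]$, and for that quantity the smoothing genuinely helps, since a change of variables gives $\dE[W_k\mid\cF_{k-1}]=\int_{\dR}K(z)\,h(\wh{\theta}_{k-1},x-zh_k)\,dz$ with $h(\vartheta,\cdot)$ the density of $\vartheta^{\prime}X_1$; continuity of $h$ in the parameter, $\wh{\theta}_n\to\theta$ a.s.\ and a Toeplitz argument then give $\frac1n\sum_k\dE[W_k\mid\cF_{k-1}]\to h(\theta,x)$ a.s., while the centered fluctuation is a martingale with predictable quadratic variation $\cO(n^{1+\alpha})$, hence $o(n)$. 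But once you do this the oracle $W_k^{\theta}$ has dropped out entirely: this is precisely the paper's proof, which never compares with the true-parameter kernel and instead computes $\dE[W_k\mid\cF_{k-1}]$ directly. Two further imprecisions in your sketch are benign but should be fixed in the same spirit: (i) for the bias you must control the weighted sum $\sum_k W_k(x)\|\wh{\theta}_{k-1}-\theta\|\,\|X_k\|$, not the unweighted average $\frac1n\sum_k\|\wh{\theta}_{k-1}-\theta\|$; this is done by dominating $\|\wh{\theta}_{k-1}-\theta\|\,\|X_k\|$ by the deterministic sequence $b_k=k^{1/a}\sqrt{\log\log k/k}$ and applying the compensator-plus-martingale argument to $\sum_k b_kW_k(x)$, as in \eqref{CVGSIG3}; (ii) with your enlarged filtration (where $X_k$ is $\cF_{k-1}$-measurable) the noise quadratic variation is $\sigma^2\sum_kW_k^2(x)$, and identifying its order $n^{1+\alpha}$ requires an extra martingale step involving fourth conditional moments of $W_k$, whereas the paper's filtration yields $\sigma^2\sum_k\dE[W_k^2\mid\cF_{k-1}]$ directly.
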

\noindent

\begin{proof} The proof is given Appendix A. 
\end{proof}

\begin{rem}
In the particular case where $(X_n)$ is a sequence of  independent random vectors of $\dR^p$ sharing the same $\cN(m, \Sigma)$ distribution
where the covariance matrix $\Sigma$ is positive definite, we can replace $n^{1/a}$ by $\log n$ into \eqref{RATEASCVGFN1} and \eqref{RATEASCVGFN2}.
Consequently, for any $x\in\dR$, we obtain that if $0<\alpha <1/3$, 
\begin{equation*} 
\wh{f}_{n}(x)-f(x)=\mathcal{O}\left(n^{-\alpha}\right)
\hspace{1cm}\text{a.s.}
\end{equation*}
while, if $1/3\leq\alpha<1$,
\begin{equation*} 
\wh{f}_{n}(x)-f(x)=\mathcal{O}\left(\sqrt{n^{\alpha-1}}\log n\right)
\hspace{1cm}\text{a.s.}
\end{equation*}
\end{rem}

\noindent
The asymptotic normality of  the estimator $\wh{f}_{n}$ is as follows.

\begin{thm}
\label{THMCLT}
Assume that $(\text{LC}\,)$ and $(\text{H}_1)$ to $(\text{H}_3)$ hold.
In addition, suppose that the sequence $(X_{n})$ has a finite moment of order $a=6$
and that the sequence $(\veps_{n})$ has a finite conditional moment of order $b>2$.
Then, as soon as the bandwidth $(h_n)$ satisfies $h_n = 1/n^{\alpha}$ with $1/3<\alpha <1$, we have
for any $x\in\dR$, the pointwise asymptotic normality
\begin{equation}
\label{CLTFN}
\sqrt{nh_n}(\wh{f}_{n}(x)-f(x)) \liml \cN\Bigl(0,\frac{\sigma^{2}\nu^2}{(1+\alpha)h(\theta,x)}\Bigr)
\end{equation}
where $h(\theta,x)$ stands for the probability density function associated with $(\theta^\prime X_n)$.
\end{thm}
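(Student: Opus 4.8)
The plan is to split the centered estimator into a martingale noise term and a bias term, to extract the asymptotic normality from the former via a central limit theorem for martingales, and to show that the latter is negligible once multiplied by $\sqrt{n h_n}$. Writing $Y_k - f(x) = \veps_k + (f(\theta' X_k) - f(x))$ and setting $M_n(x) = \sum_{k=1}^n W_k(x)\veps_k$ together with $D_n(x) = \sum_{k=1}^n W_k(x)$, I would first record
$$\wh{f}_n(x) - f(x) = \frac{M_n(x)}{D_n(x)} + \frac{1}{D_n(x)}\sum_{k=1}^n W_k(x)\bigl(f(\theta' X_k) - f(x)\bigr).$$
Let $\cF_n$ denote the $\sigma$-field generated by $(X_1, Y_1), \ldots, (X_n, Y_n)$, so that $\wh{\theta}_{n-1}$ is $\cF_{n-1}$-measurable while $X_n$ is independent of $\cF_{n-1}$; since $W_k(x)$ is measurable with respect to $\cF_{k-1}\vee\sigma(X_k)$ and $\veps_k$ is a martingale difference with conditional variance $\sigma^2$ independent of $X_k$ given the past, the sequence $M_n(x)$ is a square-integrable real martingale. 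As in the proof of Theorem \ref{THMASCVG} I may also assume the denominator law of large numbers $D_n(x)/n \to h(\theta, x)$ a.s., where $h(\theta, x) > 0$ by $(\text{H}_2)$.

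For the martingale term I would apply the central limit theorem for martingales with the deterministic normalization $v_n^2 = n/h_n = n^{1+\alpha}$. The predictable quadratic variation is $\langle M(x)\rangle_n = \sigma^2\sum_{k=1}^n \E[W_k(x)^2\mid\cF_{k-1}]$ up to a martingale remainder, and the key computation is the kernel approximation
$$\E[W_k(x)^2\mid\cF_{k-1}] = \frac{1}{h_k}\int_{\dR} K^2(w)\, h_{\wh{\theta}_{k-1}}(x - h_k w)\, dw \longrightarrow \frac{\nu^2 h(\theta, x)}{h_k},$$
where $h_t$ denotes the density of $t' X_k$; this uses $(\text{H}_2)$ together with $\wh{\theta}_{k-1}\to\theta$ a.s. from Lemma \ref{LEMLGNSIR}. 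Summing by a Toeplitz argument gives $v_n^{-2}\langle M(x)\rangle_n \to \sigma^2\nu^2 h(\theta, x)/(1+\alpha)$. The Lindeberg condition I would check through a Lyapunov condition of order $2+\delta$ with $0 < \delta \le b - 2$: a computation analogous to the one above yields $\sum_{k=1}^n \E[|W_k(x)\veps_k|^{2+\delta}\mid\cF_{k-1}] = \cO(n^{1+\alpha(1+\delta)})$, and $n^{1+\alpha(1+\delta)}/v_n^{2+\delta}\to 0$ precisely because $\alpha < 1$. The martingale CLT then gives $M_n(x)/v_n \liml \cN(0, \sigma^2\nu^2 h(\theta, x)/(1+\alpha))$, and since $\sqrt{n h_n}\,M_n(x)/D_n(x) = (M_n(x)/v_n)/(D_n(x)/n)$, Slutsky's lemma and the denominator limit produce exactly the variance $\sigma^2\nu^2/((1+\alpha) h(\theta, x))$.

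It remains to show that $\sqrt{n h_n}$ times the bias term tends to $0$ in probability. I would split $f(\theta' X_k) - f(x) = \bigl(f(\theta' X_k) - f(\wh{\theta}_{k-1}' X_k)\bigr) + \bigl(f(\wh{\theta}_{k-1}' X_k) - f(x)\bigr)$. For the second piece, the compact support of $K$ localizes $\wh{\theta}_{k-1}' X_k$ within $\cO(h_k)$ of $x$, so the Lipschitz property $(\text{H}_3)$ makes each conditional contribution $\cO(h_k)$; summing yields a bias of order $\cO(n^{-\alpha})$, and $\sqrt{n h_n}\,\cO(n^{-\alpha}) = \cO(n^{(1-3\alpha)/2})$, which tends to $0$ exactly when $\alpha > 1/3$ — this is where the lower bound on $\alpha$ is forced. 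For the first piece, $(\text{H}_3)$ bounds it by $L\,\|\wh{\theta}_{k-1} - \theta\|\,\|X_k\|$, and combining the rate $\|\wh{\theta}_{k-1} - \theta\|^2 = \cO(\log(\log k)/k)$ from Lemma \ref{LEMLGNSIR} with the moment control on $(X_n)$ shows this term is of strictly smaller order after the $\sqrt{n h_n}$ scaling.

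The main obstacle is precisely the presence of the data-dependent direction $\wh{\theta}_{k-1}$ inside the kernel. It prevents $W_k(x)$ from behaving like an i.i.d.\ weight and makes every conditional-moment computation depend on the random density $h_{\wh{\theta}_{k-1}}$ rather than on $h(\theta, \cdot)$. Controlling these perturbations — justifying the kernel approximations uniformly along the a.s.\ trajectory $\wh{\theta}_{k-1}\to\theta$, and bounding the replacement errors $W_k(x) - h_k^{-1}K((x - \theta' X_k)/h_k)$ in both the quadratic variation and the Lyapunov sum — is where the strengthened moment hypothesis $a = 6$ is needed, to provide enough integrability so that these error terms still vanish after multiplication by $\sqrt{n h_n}$.
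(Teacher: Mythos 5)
Your proposal follows essentially the same route as the paper's Appendix B: the same decomposition into martingale term, bias term and denominator, the same martingale CLT with normalization $n^{1+\alpha}$ and the same quadratic-variation limit, a Lyapunov condition of order $2+\delta$ which is exactly the paper's Lindeberg check with $\delta=b-2$, and the same split of the bias into a localization piece and a $\wh{\theta}_{k-1}$-versus-$\theta$ replacement piece. The one quantitative slip is your claim that the replacement piece is ``of strictly smaller order'': the paper's bound for it is $\cO\bigl(n^{1/a}\sqrt{n\log(\log n)}\bigr)$ before normalization, hence $\cO\bigl(n^{1/a-\alpha/2}\sqrt{\log(\log n)}\bigr)$ after dividing by $N_n(x)\sim n\,h(\theta,x)$ and multiplying by $\sqrt{nh_n}$, which for $a=6$ actually \emph{dominates} the localization piece $\cO\bigl(n^{(1-3\alpha)/2}\bigr)$ and vanishes exactly when $\alpha>1/3$. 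Relatedly, your closing paragraph misplaces where $a=6$ enters: the quadratic-variation and Lyapunov computations need no moment assumption on $(X_n)$ at all, since $\wh{\theta}_{k-1}$ is $\cF_{k-1}$-measurable and the conditional expectations are computed directly through the density $h(\wh{\theta}_{k-1},\cdot)$; the moment hypothesis is needed solely to control $\sup_{1\leq k\leq n}\|X_k\|=o(n^{1/a})$ in the replacement term, which is the real reason the theorem requires $a=6$ together with $\alpha>1/3$.
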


\begin{proof} The proof is given Appendix B. 
\end{proof}

%%%%%%%%%%%%%%%%%%%%%%%%%%%%%%%%%%%%%%%%%%%%%%%%%%%%%%%%%%%%%%%%%%%%%%%%%%%%%%%%%%%%%%%%%%%%%%%%%%%%%%%%%%

\section{NUMERICAL SIMULATIONS}

%%%%%%%%%%%%%%%%%%%%%%%%%%%%%%%%%%%%%%%%%%%%%%%%%%%%%%%%%%%%%%%%%%%%%%%%%%%%%%%%%%%%%%%%%%%%%%%%%%%%%%%%%%

The goal of this Section is to illustrate via some numerical experiments the theoretical results of Section $\!3$. 
We will provide the numerical behavior of our recursive estimators combining the recursive Nadaraya-Watson estimator  
of the link function $f$ together with the recursive SIR estimator 
of the parameter $\theta$. First of all, we describe in Section 4.1 the simulated model used in the numerical study and we present  the estimation procedure, 
in particular the choice of the bandwidth parameter $\alpha$ by a cross-validation criterion.  Then, we illustrate in 
Sections 4.2 and 4.3 the almost sure convergence and the asymptotic normality of our recursive Nadaraya-Watson estimator of $f$.

\subsection{Simulated model and  estimation procedures}
%%%%%%%%%%%%%%%%%%%%%%%%%%%%%%%%%%%%%%%%%%%%%%%%%%%%%%%%%%%%%%%%%%%%%%%%%%%%%%%%%%%%%%%%%%%%%%%%%%%%%%%%%%

We consider the semiparametric regression model given, for all $n\geq 1$, by
\begin{equation*}
Y_{n}=f(\theta^{\prime} X_{n})+\veps_{n} \leqno (\text{M})
\end{equation*}
where the link function $f$ is defined, for all $x \in \dR$, by
$$
f(x)= x \exp \Bigl( \frac{3x }{4} \Bigr).
$$
The parameter $\theta$ belongs to $\dR^p$ with $p=10$ and is given by
$$
\theta= \frac{1}{\sqrt{10}} \Bigl(1, 2,-2, - 1, 0,\ldots, 0 \Bigr).
$$
Moreover, $(X_n)$ is a sequence of independent random vectors of $\dR^p$ sharing the same $\cN(0, \rI_p)$ distribution,
while $(\veps_n)$ is a sequence of independent random variables with standard $\cN(0,1)$ distribution, independent of $(X_n)$.
In Figure~\ref{nuage}, we present two scatterplots for a sample of size $n=1000$ generated from model (M).
On the left side, one can observe the data in the ``true'' reduction subspace, that is the scatterplot of 
$(\theta^{\prime}X_1, Y_1), \ldots, (\theta^{\prime}X_n, Y_n)$ based on the ``true'' EDR direction $\theta$.
On the right side, we plot the data obtained from the estimated EDR direction $\wh{\theta}_n$ calculated
via our recursive SIR procedure, that is the scatterplot of  
$(\wh{\theta}_n^{\,\prime}X_1, Y_1), \ldots, (\wh{\theta}_n^{\,\prime} X_n, Y_n)$. 
One can clearly notice that the EDR direction  has been well estimated.

\begin{figure}[htb]
\label{nuage}
\vspace{-0.8cm}
\begin{center}
\begin{tabular}{cc} 
\includegraphics[height=8cm,width=7.3cm]{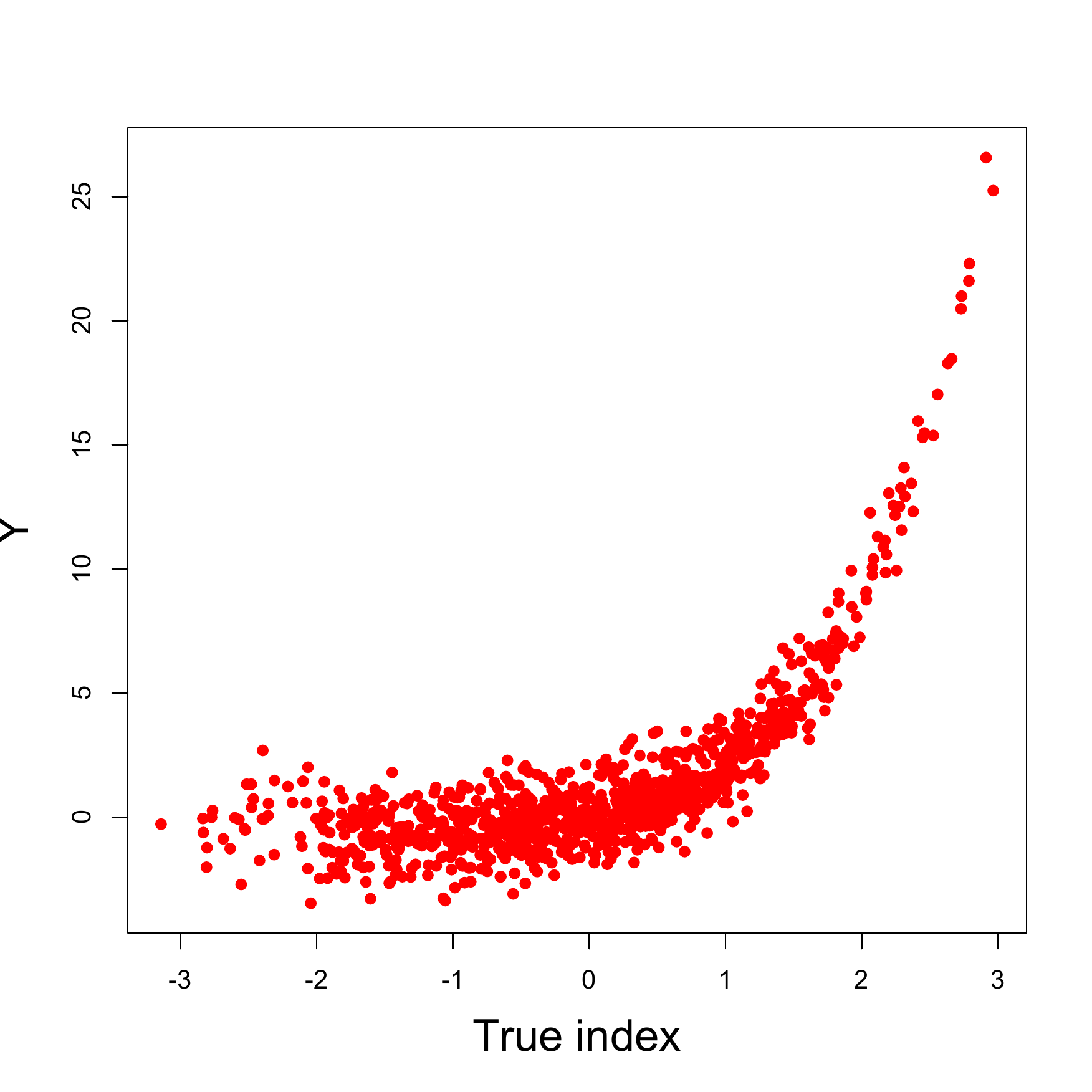}
&
\includegraphics[height=8cm,width=7.3cm]{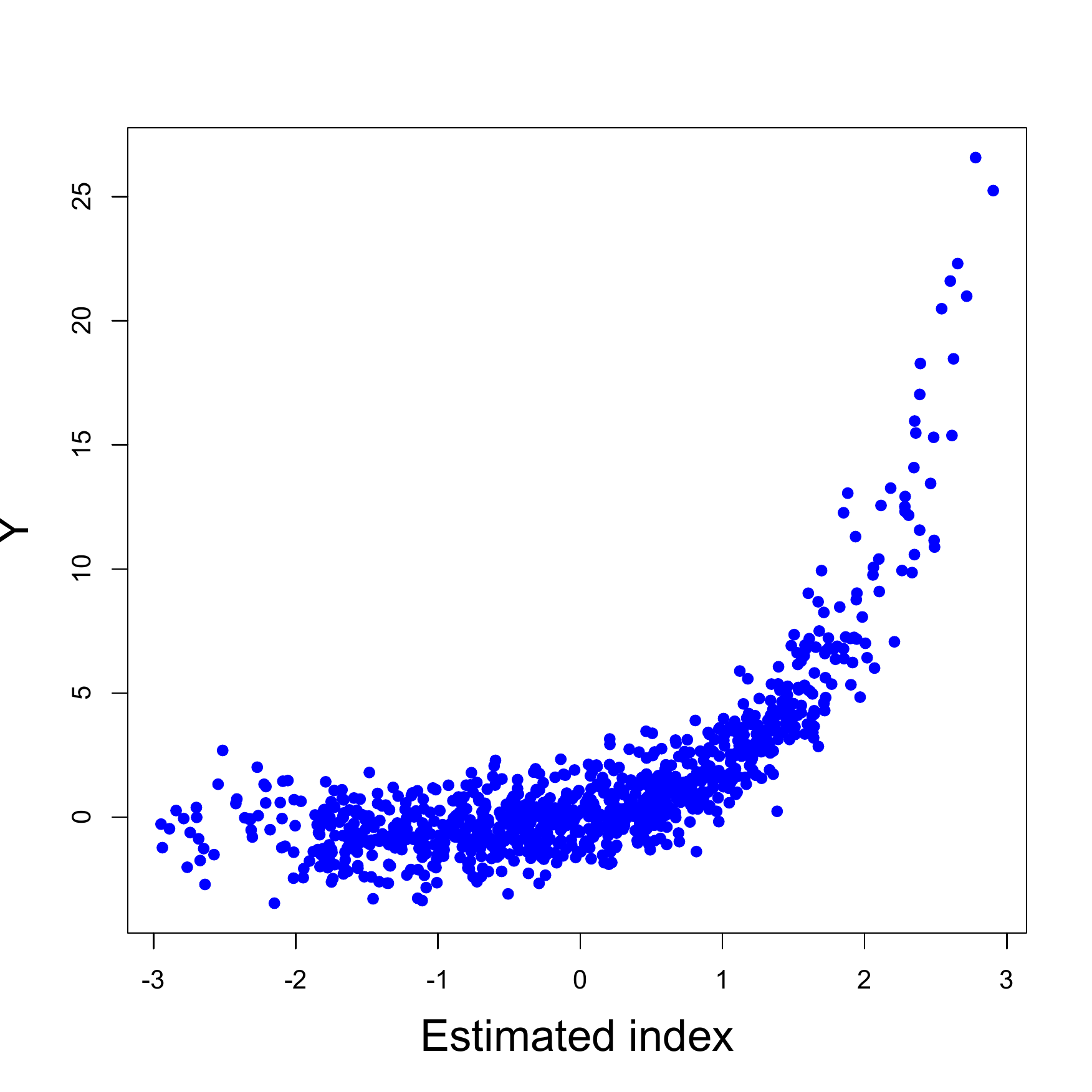}
\end{tabular}  
%\caption{ }
\end{center} 
\vspace{-0.4cm}
\small Figure 4.1. \\
Scatterplots of $(\theta^{\prime}X_1, Y_1), \ldots, (\theta^{\prime}X_n, Y_n)$ and
$(\wh{\theta}_n^{\,\prime}X_1, Y_1), \ldots, (\wh{\theta}_n^{\,\prime} X_n, Y_n)$.
\end{figure} 

\noindent
For the recursive Nadaraya-Watson estimator $\wh{f}_n$ of $f$, we have chosen the well-known Epanechnikov kernel
$$K(x)=\frac{3}{4}(1-x^2) \rI_{\{| x |\leq 1\}}$$
and the bandwidth $h_n=1/n^{\alpha}$ with $0<\alpha<1$. We now need to evaluate an optimal value 
for the smoothing parameter $\alpha$. 
The problem of deciding how much to smooth is of great importance in nonparametric regression.  We
propose to make use of the optimal data-driven bandwidth $\alpha$ which minimizes
the cross-validation criterion 
$$
CV (\alpha) = \sum_{k=p+1}^{n} (Y_k - \wh{Y}_{k,\alpha})^2
\hspace{1cm}\text{where} \hspace{1cm}
\wh{Y}_{k,\alpha}=\wh{f}_{k-1}(\wh{\theta}_{k-1}^{\,\prime}X_k).
$$
We can observe by simulations that the  $CV(\alpha)$ functions are all convex and the corresponding 
optimal data-driven bandwidth $\alpha$ lies into the interval $[0.33,0.38]$. Consequently, in all Section 4, 
we have chosen the optimal value $\alpha=0.35$.

\subsection{Almost sure convergence}
%%%%%%%%%%%%%%%%%%%%%%%%%%%%%%%%%%%%%%%%%%%%%%%%%%%%%%%%%%%%%%%%%%%%%%%%%%%%%%%%%%%%%%%%%%%%%%%%%%%%%%%%%%
 
The good numerical performances of the recursive SIR estimator $\wh{\theta}_n$ were perviously illustrated in
\cite{BERCU12}, \cite{NGUYEN10}. In order to keep this section brief, we only focus our attention
on the almost sure convergence of $\wh{f}_n$.
We generate $N=1000$  samples of  different sizes $n=200$, $500$, $1000$, $2000$ from model (M) with $p=10$. 
For each sample, we calculate the estimation $\wh{f}_n(\wh{\theta}_n^{\prime}x)$ of $f(\theta^{\prime}x)$ for $10$ different values of $x \in \dR^p$. 
The boxplots of the $\wh{f}_n(\wh{\theta}_n^{\prime}x)$'s are given in Figure~\ref{boxplot}.
The circle point in each  boxplot 
represents the true value $f(\theta^{\prime}x)$ to easily judge the quality of the estimations. 
One can observe that the dispersion of the $\wh{f}_n(\wh{\theta}_n^{\prime}x)$'s 
are small and the mean is very close to the true value $f(\theta^{\prime}x)$. One can also notice that the larger is  the sample size $n$,  
the greater is the quality measure. As it was expected, the quality of the estimation decreases for large values of $f(\theta^{\prime}x)$ 
since the number of observations around $x$ decreases, see the scatterplots of Figure~\ref{nuage} to be convinced.
\ \vspace{1cm}\\
\begin{figure}[htb]\label{boxplot}
\vspace{-2cm}
\begin{center}
\begin{tabular}{c}  
$n=200$ \hspace{6cm} $n=500$  
\vspace{-0.5cm}\\
\includegraphics[height=7.2cm,width=7.3cm]{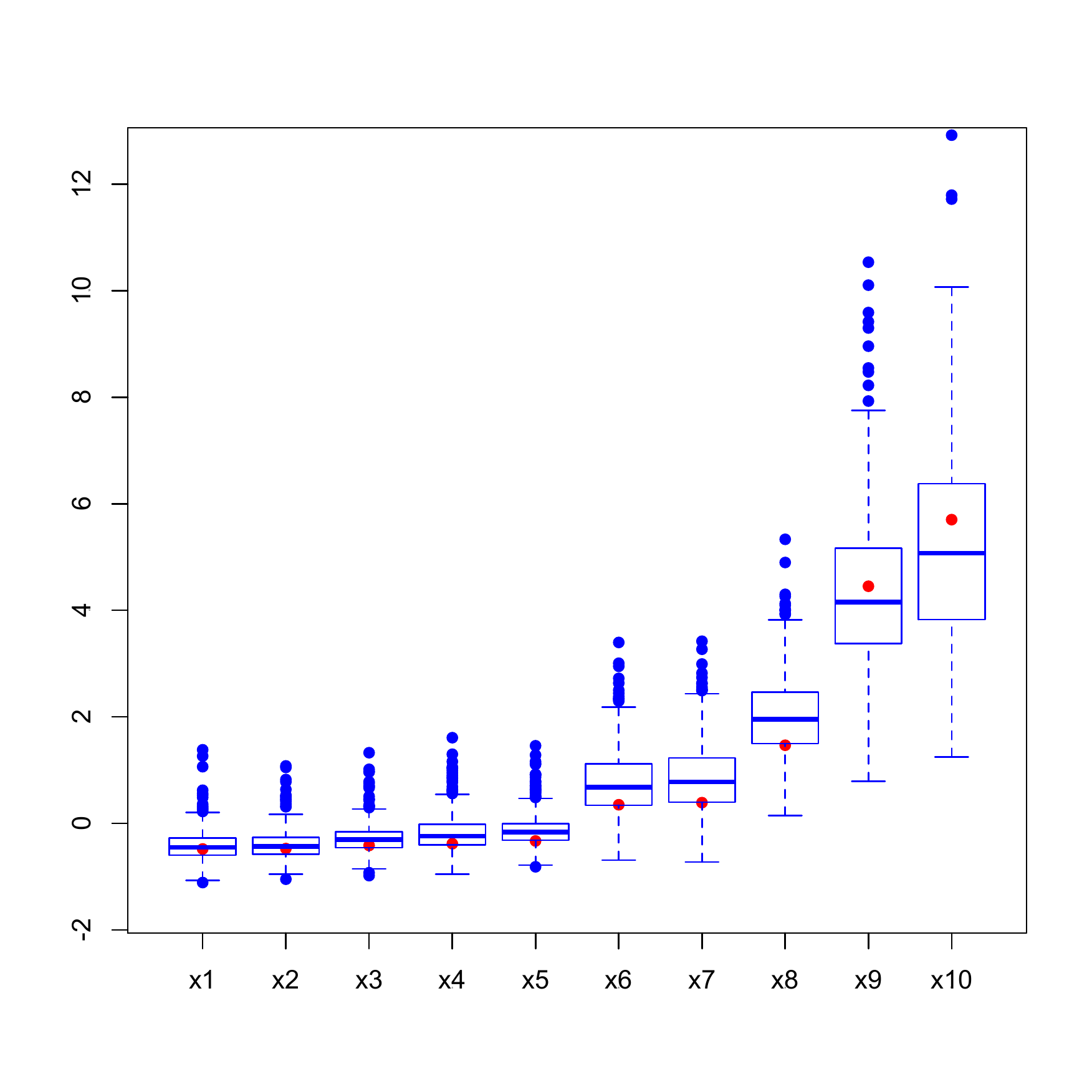}
~~~
\includegraphics[height=7.2cm,width=7.3cm]{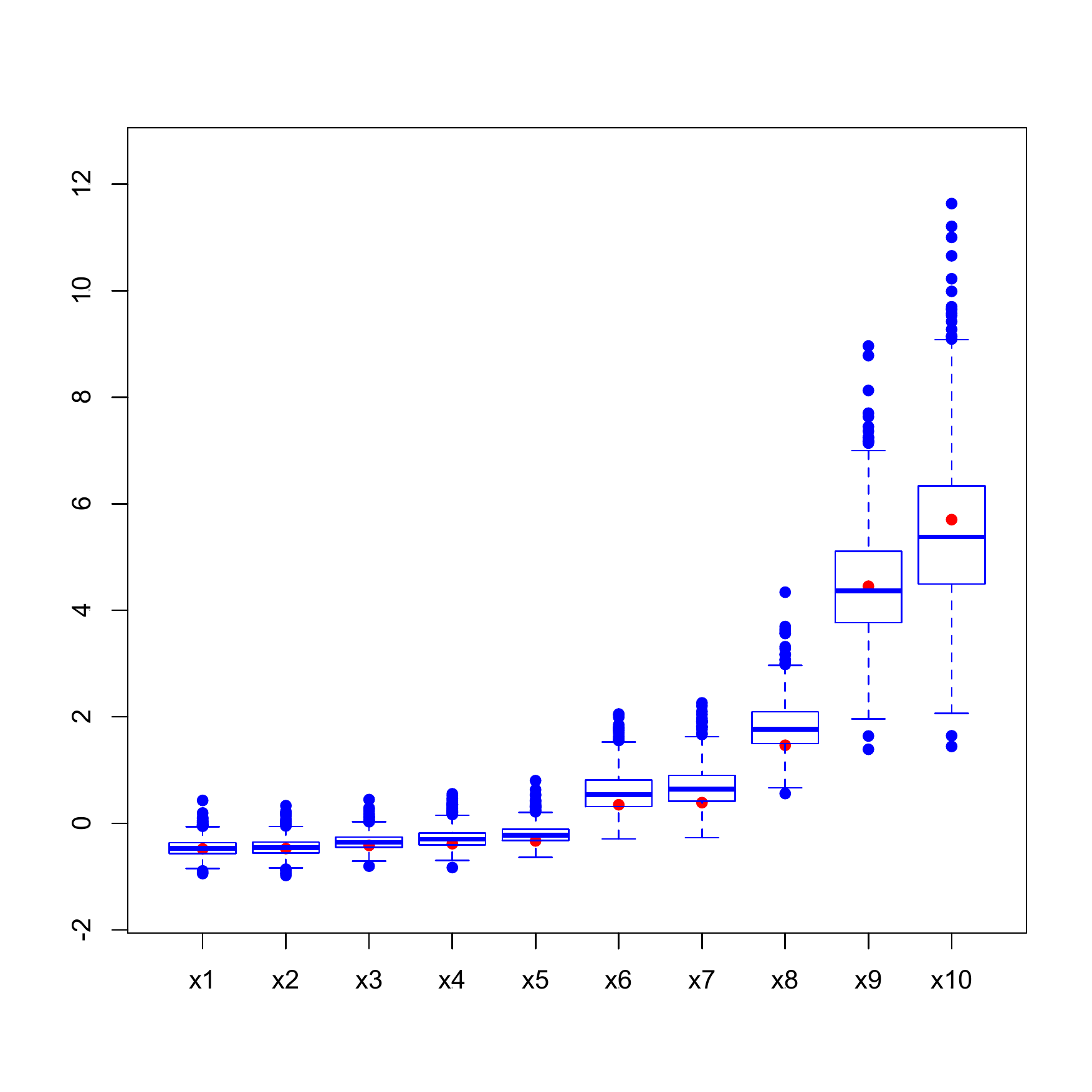}\\
$n=1000$ \hspace{6cm} $n=2000$ 
\vspace{-0.5cm} \\
\includegraphics[height=7.2cm,width=7.3cm]{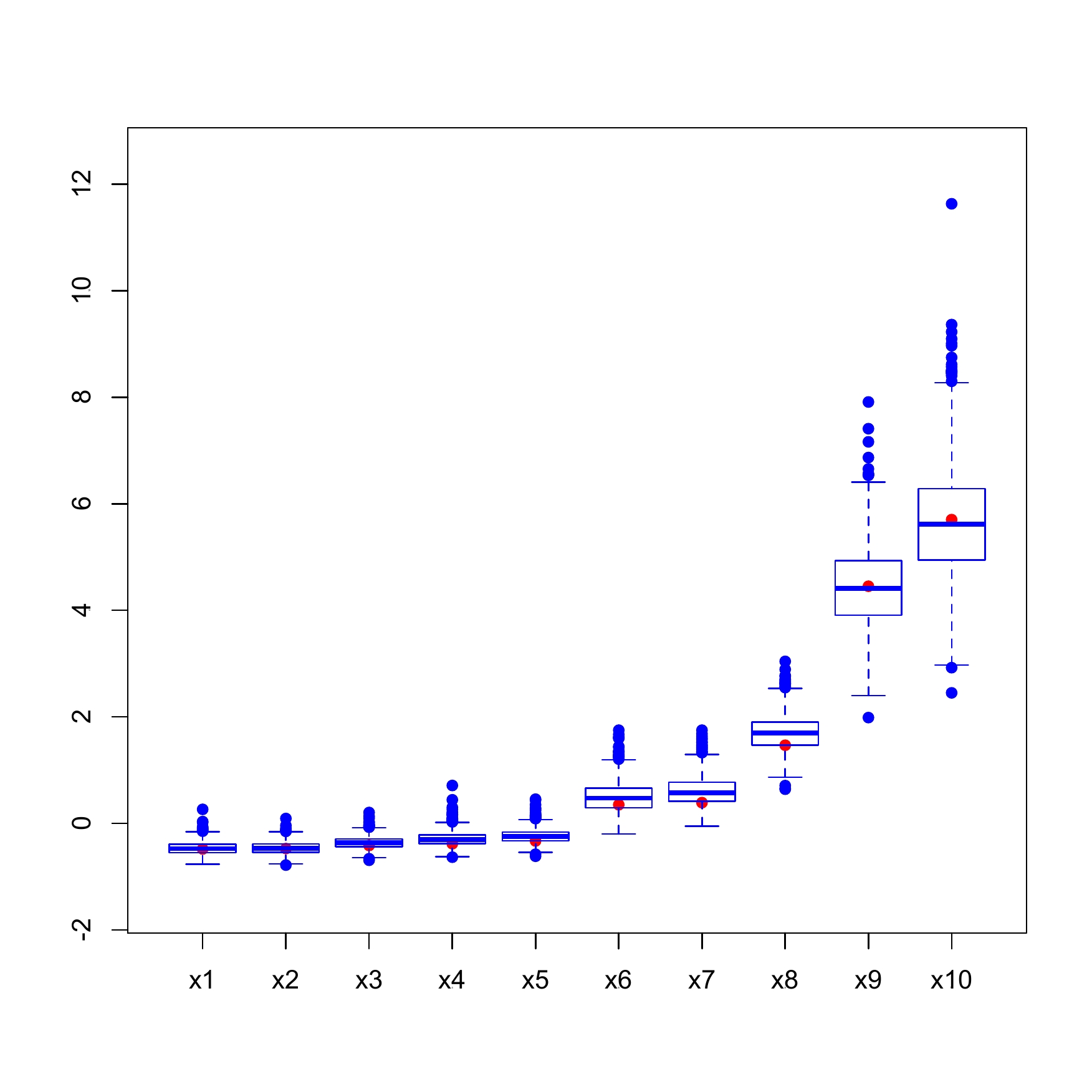}
~~~
\includegraphics[height=7.2cm,width=7.3cm]{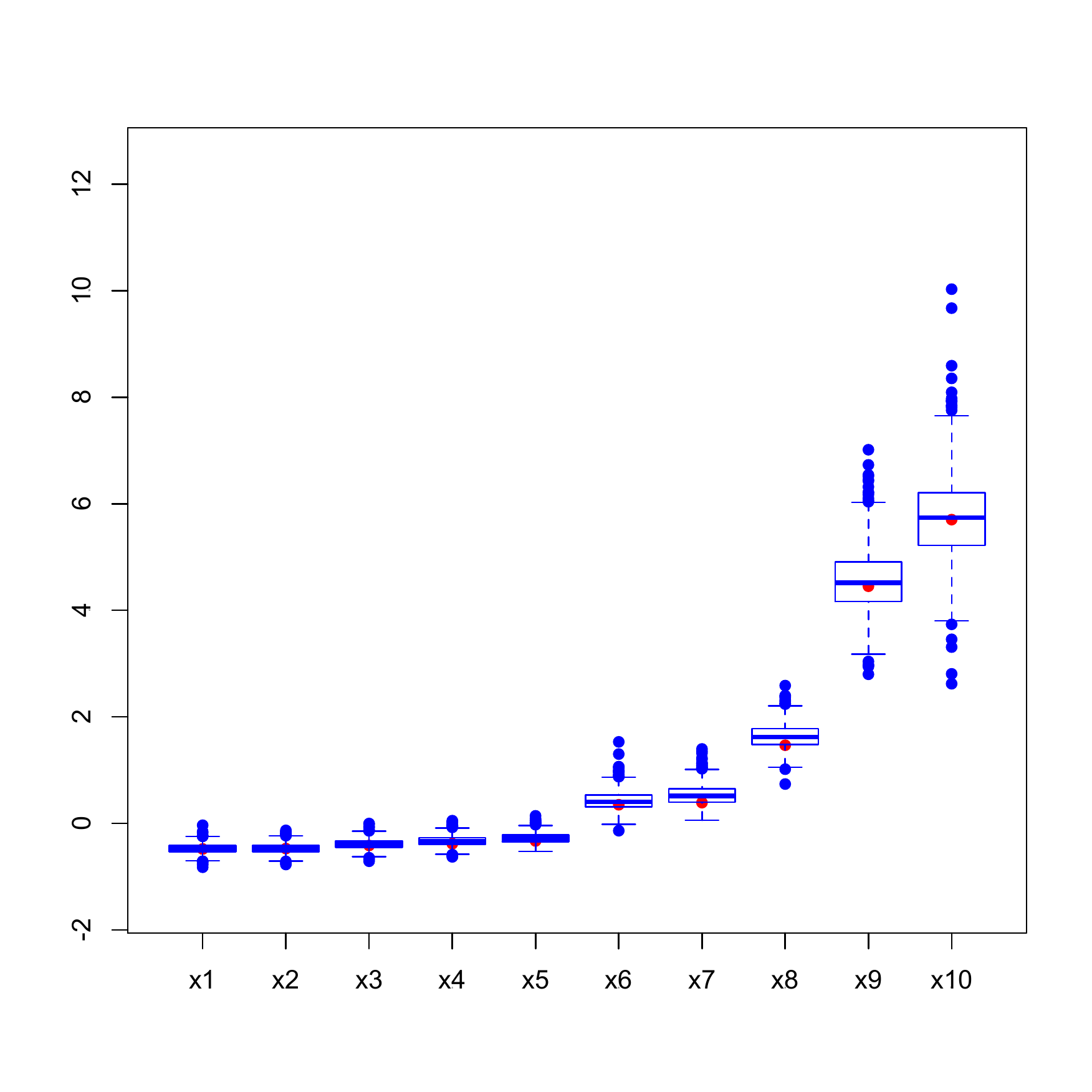}
  \end{tabular}  
%\caption{ }
\end{center} 
\vspace{-0.4cm}
\small Figure 4.2. \\
Almost sure convergence of
$\wh{f}_n(\wh{\theta}_n^{\prime}x)$ to $f(\theta^{\prime}x)$ for $10$ different values of $x$.
\end{figure}

\newpage

\subsection{Asymptotic normality}
%%%%%%%%%%%%%%%%%%%%%%%%%%%%%%%%%%%%%%%%%%%%%%%%%%%%%%%%%%%%%%%%%%%%%%%%%%%%%%%%%%%%%%%%%%%%%%%%%%%%%%%%%%

In order to illustrate  the asymptotic normality of our recursive Nadaraya-Watson estimator, 
we generate $N=1000$ realizations of $\wh{f}_n(\wh{\theta}_n^{\prime}x)$
for $n = 1000$ from model (M) with $p=10$. In Figure~\ref{norma}, we plot the histogram of the standardized values of the
$\wh{f}_n(\wh{\theta}_n^{\prime}x)$'s for $2$ different values of $x \in \dR^p$. We add the density of the standard normal density
on each histogram. One can clearly see that the normal density coincides pretty well with all the histograms, 
which visually illustrates the asymptotic normality of our recursive Nadaraya-Watson estimator $\wh{f}_n$ of $f$.

\begin{figure}[htb]
\label{norma}
\vspace{-0.5cm}
\begin{center}
\begin{tabular}{cc} 
\includegraphics[height=8cm,width=7.3cm]{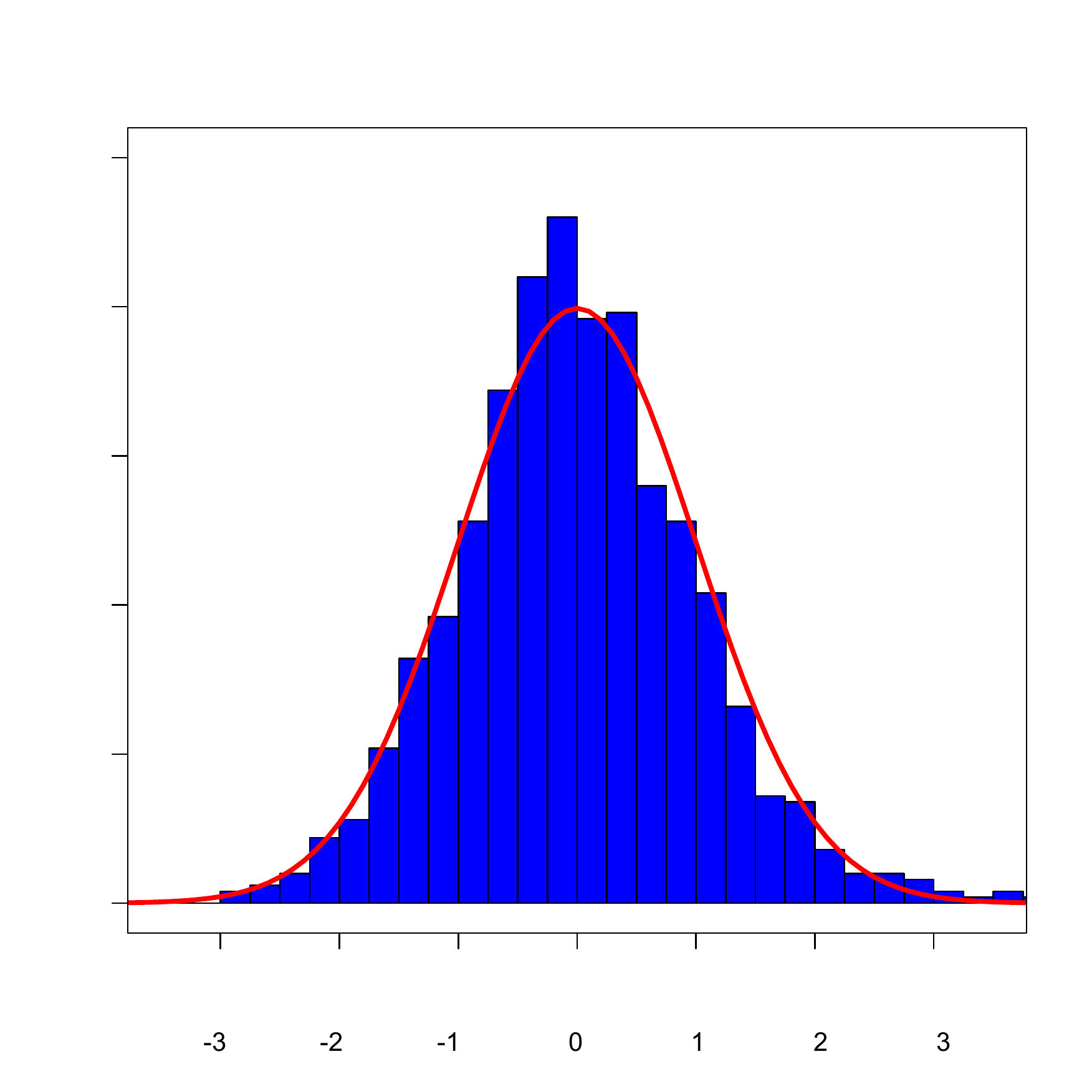}
&
\includegraphics[height=8cm,width=7.3cm]{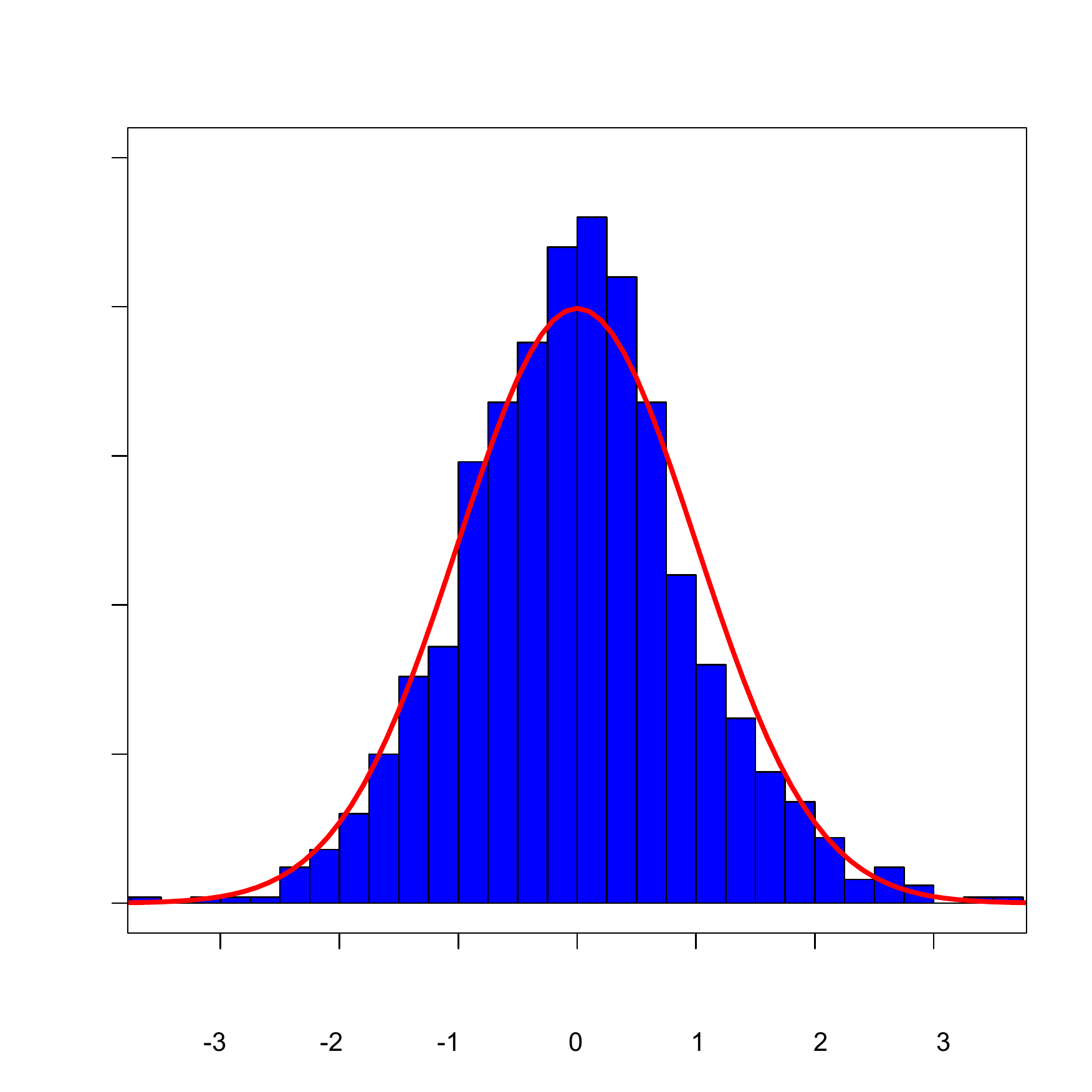}
\end{tabular}  
%\caption{}
\end{center} 
\vspace{-0.4cm}
\small Figure 4.3. \\
Asymptotic normality of
$\wh{f}_n(\wh{\theta}_n^{\prime}x)$ to $f(\theta^{\prime}x)$ for $2$ different values of $x$.
\end{figure} 
%%

%%%%%%%%%%%%%%%%%%%%%%%%%%%%%%%%%%%%%%%%%%%%%%%%%%%%%%%%%%%%%%%%%%%%%%%%%%%%%%%%%%%%%%%%%%%%%%%%%%%%%%%%%%

\section*{Appendix A}

\begin{center}
{\small PROOF OF THEOREM \ref{THMASCVG}}
\end{center}

\renewcommand{\thesection}{\Alph{section}} 
\renewcommand{\theequation}
{\thesection.\arabic{equation}} \setcounter{section}{1}  
\setcounter{equation}{0}

%%%%%%%%%%%%%%%%%%%%%%%%%%%%%%%%%%%%%%%%%%%%%%%%%%%%%%%%%%%%%%%%%%%%%%%%%%%%%%%%%%%%%%%%%%%%%%%%%%%%%%%%%%

In order to prove the almost sure pointwise convergence of Theorem  \ref{THMASCVG},
we shall denote for all $x\in \dR$
$$
P_{n}(x)=\sum_{k=1}^nW_{k}(x)\veps_{k}, \hspace{1cm} N_n(x)=\sum_{k=1}^{n} W_{k}(x),
$$
and
$$
Q_{n}(x)=\sum_{k=1}^nW_{k}(x)(f(\Phi_k)-f(x))
$$
where $\Phi_n=\theta^{\prime}X_n$. We clearly obtain from \eqref{SEMPAR} the main decomposition
\begin{equation}
\label{MAINDECO}
\wh{f}_{n}(x) -f(x)=\frac{P_n(x)+Q_n(x)}{N_n(x)}.
\end{equation}
We shall establish the asymptotic behavior of each sequence $(P_n(x))$, $(Q_n(x))$ and $(N_n(x))$.
Let $(\cF_n)$ be the filtration given by $\cF_n=\sigma(X_1,\ldots,X_n,Y_1,\ldots,Y_{n})$. First of all,
we can split $N_n(x)$ into two terms,
\begin{equation}
\label{DECON}
N_n(x)=M_n^{(N)}(x)+R_n^{(N)}(x)
\end{equation}
where
$$
M_n^{(N)}(x)=\sum_{k=1}^n\Bigl( W_{k}(x) - \dE[ W_{k}(x) | \cF_{k-1}]\Bigr)
\hspace{0.5cm} \text{and} \hspace{0.5cm}
R_n^{(N)}(x)=\sum_{k=1}^{n} \dE[ W_{k}(x) | \cF_{k-1}].
$$
On the one hand, we have
$$
\dE[W_{n}(x)|\cF_{n-1}]=\frac{1}{h_{n}}\int_{\dR^p}K\Bigl(\frac{x-\wh{\theta}_{n-1}^{\,\prime}x_n}{h_{n}}\Bigr)g(x_n)\,dx_n.
$$
We can assume without loss of generality that, for $n$ large enough, at least one component of $\wh{\theta}_{n}$ is different from zero a.s. 
As a matter of fact, we already saw from Lemma \ref{LEMLGNSIR} that $\wh{\theta}_{n}$ converges a.s. to $\theta$ which is different from zero.
For the sake of simplicity, suppose that the first component $\wh{\theta}_{n-1, 1}\neq 0$ a.s. We can make the change of variables
$$
z= \displaystyle\frac{x-\wh{\theta}^{\, \prime}_{n-1}x_n}{h_n} 
$$
and $z_2= x_{n,2}, \ldots, z_p= x_{n,p}$. The Jacobian of this linear transformation is given by
$$
J= -\frac{h_n}{\widehat \theta_{n-1,1}}.
$$
Consequently, we obtain that
\begin{equation}
\label{EXPW1}
\dE[W_{n}(x) | \cF_{n-1}]= \int_ {\dR}K(z) h(\widehat\theta_{n-1},x-zh_n)dz
\end{equation}
where
\begin{equation*}
h(\wh{\theta}_{n-1},x) =\frac{1}{|\widehat \theta_{n-1,1}|} \int_ {\dR^{p-1}}
g\Bigl(\frac{1}{\widehat \theta_{n-1,1}}\Bigl(x - \sum_{k=2}^{p} \wh{\theta}_{n-1,k}z_k\Bigr), z_2,\ldots, z_p\Bigr) dz_2\ldots dz_p.
\end{equation*}
One can observe that $h(\theta,x)$ is exactly the probability density function associated with 
the identically distributed sequence $(\theta^\prime X_n)$. Therefore, as the probability density function $g$ is
continuous, twice differentiable with bounded derivatives, we deduce from
\eqref{EXPW1} togheter with Taylor's formula that
\begin{eqnarray*}
\dE[W_{n}(x) | \cF_{n-1}] 
&=&\int_{\dR}K(z)\Bigl( h(\wh{\theta}_{n-1},x) -zh_nh^\prime(\wh{\theta}_{n-1},x)  \\
& & \qquad +
\frac{z^2h_n^2}{2}h^{\prime \prime}(\wh{\theta}_{n-1},x -zh_n\xi )\Bigr)dz, \\
&=&h(\wh{\theta}_{n-1},x)+\frac{h_{n}^{2}}{2}\int_{\dR}z^{2}K(z)
h^{\prime \prime}(\wh{\theta}_{n-1},x -zh_n\xi)dz 
\end{eqnarray*}
where $0<\xi<1$. Consequently,  for $n$ large enough,
\begin{equation} 
\vspace{1ex}
\label{sharpEXPW1}
\Bigl|\dE[W_{n}(x)|\cF_{n-1}] -h(\wh{\theta}_{n-1},x)\Bigr|\leq M_{h}\tau^{2}h_{n}^{2} \hspace{1cm}\text{a.s.}
\end{equation}
where
$$ M_{h}=\sup_{x \in \dR} \Bigl|h^{\prime \prime}(\wh{\theta}_{n-1},x )\Bigr|
\hspace{1cm} \text{and} \hspace{1cm}
\tau^{2}=\frac{1}{2} \int_{\dR}x^{2}K(x)dx.$$
Hence, we find from \eqref{sharpEXPW1} that
$$
\sum _{k=1}^{n}\Bigl|\dE[W_{k}(x)\mid \cF_{k-1}] - h(\wh{\theta}_{k-1},x)\Bigr| = \mathcal{O}\Bigl(\sum _{k=1}^{n}h^2_k\Bigr)
\hspace{1cm}\text{a.s.}
$$
It follows from the continuity of $h$ together with the fact that $\wh{\theta}_{n}$ converges to $\theta$ a.s.
and $h_n$ goes to zero that
\begin{equation} 
\label{CVGEW1}
\lim_{n \rightarrow \infty}\frac{1}{n}\sum_{k=1}^{n} \dE[W_{k}(x)|\cF_{k-1}] = h(\theta, x)
\hspace{1cm}\text{a.s.}
\end{equation}
which of course immediately implies that for all $x \in \dR$
\begin{equation} 
\label{CVGRN}
\lim_{n \rightarrow \infty}\frac{R_n^{(N)}(x)}{n}= h(\theta, x) \hspace{1cm}\text{a.s.}
\end{equation}
On the other hand, $(M_n^{(N)}(x))$ is a square integrable martingale difference sequence with predictable quadratic variation
given by
\begin{eqnarray*}
<\!M^{(N)}(x)\!>_n&=&\sum_{k=1}^n \dE[(M_k^{(N)}(x) -M_{k-1}^{(N)}(x))^2|\cF_{k-1}] ,\\
&=&\sum_{k=1}^n \Bigl( \dE[W_{k}^2(x)|\cF_{k-1}]-  \dE^2[W_{k}(x)|\cF_{k-1}]\Bigr).
\end{eqnarray*}
Via the same change of variables as in \eqref{EXPW1}, we obtain that
\begin{eqnarray*}
\dE[W_{n}^2(x)|\cF_{n-1}] &=& \frac{1}{h_n} \int_{\dR}K^2(z) h(\widehat\theta_{n-1},x-zh_n)dz,  \\
&=& \frac{1}{h_n} \int_{\dR}K^2(z)\Bigl( h(\wh{\theta}_{n-1},x) -zh_nh^\prime(\wh{\theta}_{n-1},x)  \\
& & \qquad 
+ \frac{z^2h_n^2}{2}h^{\prime \prime}(\wh{\theta}_{n-1},x -zh_n\xi )\Bigr)dz
\end{eqnarray*}
where $0<\xi<1$. Consequently,  for $n$ large enough,
\begin{equation} 
\label{sharpEXPW2}
\Bigl|\dE[W_{n}^2(x)|\cF_{n-1}] -\frac{\nu^2}{h_n} h(\wh{\theta}_{n-1},x)\Bigr|\leq M_{h}\mu^{2}h_{n} \hspace{1cm}\text{a.s.}
\end{equation}
where
$$
\nu^{2}= \int_{\dR}K^2(x)dx 
\hspace{1cm}\text{and}\hspace{1cm}
\mu^{2}= \frac{1}{2}\int_{\dR}x^2K^2(x)dx.
$$
Hence, \eqref{sharpEXPW2} ensures that
$$
\sum _{k=1}^{n}\Bigl|\dE[W_{k}^2(x)\mid \cF_{k-1}] - \frac{\nu^2}{h_k} h(\wh{\theta}_{k-1},x)\Bigr| = \mathcal{O}\Bigl(\sum _{k=1}^{n}h_k\Bigr)
\hspace{1cm}\text{a.s.}
$$
However, it is not hard to see that
$$
\lim_{n\rightarrow \infty} \frac{1}{n^{1+\alpha}}\sum_{k=1}^n \frac{1}{h_{k}} = \frac{1}{1+\alpha}.
$$
Therefore, it follows from \eqref{sharpEXPW2} together with 
the almost sure convergence of $h(\wh{\theta}_{n},x)$ to $h(\theta,x)$ and
Toeplitz's lemma that
\begin{equation}
\label{CVGEW2}
\lim_{n\rightarrow \infty}
\frac{1}{n^{1+\alpha}}\sum _{k=1}^{n} \dE[W_{k}^2(x)\mid \cF_{k-1}]
= \frac{\nu^2}{1+\alpha}h(\theta,x)  \hspace{1cm}\text{a.s.}
 \end{equation} 
Furthermore, we also have from \eqref{sharpEXPW1} that
\begin{equation} 
\label{CVGEW1SQ}
\lim_{n \rightarrow \infty}\frac{1}{n}\sum_{k=1}^{n} \dE^2[W_{k}(x)|\cF_{k-1}] = h^2(\theta, x)
\hspace{1cm}\text{a.s.}
\end{equation} 
Consequently, we deduce from \eqref{CVGEW2} and \eqref{CVGEW1SQ} that for all $x \in \dR$,
\begin{equation} 
\label{CvgBracketMN}
\lim_{n\rightarrow \infty} \frac{<\!M^{(N)}(x)\!>_n}{n^{1+\alpha}} = \frac{\nu^2}{1+\alpha}h(\theta,x)
\hspace{1cm}\text{a.s.}
\end{equation}
We are now in position to make use of the strong law of large numbers for martingales given
e.g. by Theorem 1.3.15 of \cite{DUF97}. As the probability density function $g$ is positive on
its support, we have for all $x\in \dR$, $h(\theta,x)>0$, which implies that $<\!M^{(N)}(x)\!>_n$ goes to
infinity a.s. Hence, for any $\gamma > 0$,
$(M^{(N)}_n(x))^2=o(n^{1+\alpha}(\log n)^{1+\gamma}) $ a.s.
which leads to
\begin{equation} 
\label{CVGMN}
M^{(N)}_n(x)=o(n) \hspace{1cm}\text{a.s.}
\end{equation}
Then, we obtain from \eqref{DECON}, \eqref{CVGRN} and \eqref{CVGMN} that
 for all $x \in \dR$
\begin{equation} 
\label{CVGN}
\lim_{n \rightarrow \infty}\frac{N_n(x)}{n}= h(\theta, x) \hspace{1cm}\text{a.s.}
\end{equation}
We shall now investigate the asymptotic behavior of the sequence $(P_n(x))$. Since
$(X_n)$ and $(\veps_n)$ are independent,
$(P_n(x))$ is a square integrable martingale difference sequence with predictable quadratic variation
given by
\begin{equation*}
<\!P(x)\!>_n=\sum_{k=1}^n \dE[(P_k(x) - P_{k-1}(x))^2|\cF_{k-1}] 
=\sigma^2 \sum_{k=1}^n \dE[W_{k}^2(x)|\cF_{k-1}].
\end{equation*}
Then, it follows from convergence  \eqref{CVGEW2} that
\begin{equation} 
\label{CvgBracketP}
\lim_{n\rightarrow \infty} \frac{<\!P(x)\!>_n}{n^{1+\alpha}} = \frac{\sigma^2\nu^2 }{1+\alpha}h(\theta,x)
\hspace{1cm}\text{a.s.}
\end{equation}
Consequently, we obtain from the strong law of large numbers for martingales that 
for any $\gamma > 0$ and that for all $x \in \dR$,
\begin{equation} 
\label{CVGP}
P_n(x)=o\Bigl(\sqrt{n^{1+\alpha}(\log n)^{1+\gamma}}\Bigr)=o(n) \hspace{1cm}\text{a.s.}
\end{equation}
It remains to study the asymptotic behavior of the sequence $(Q_n(x))$. We can split $Q_n(x)$ into two terms,
\begin{equation}
\label{DECOQ}
Q_n(x)=\Sigma_n(x)+\Delta_n(x)
\end{equation}
where $\wh{\Phi}_n= \wh{\theta}^{\, \prime}_{n-1}X_n$,
$$
\Sigma_n(x)=\sum_{k=1}^n W_{k}(x) ( f(\Phi_k)-f(\wh{\Phi}_k) )
\hspace{0.5cm} \text{and} \hspace{0.5cm}
\Delta_n(x)=\sum_{k=1}^{n} W_{k}(x) ( f(\wh{\Phi}_k) -f(x) ).
$$
The right-hand side of \eqref{DECOQ} is easy to handle.
As a matter of fact, the kernel $K$ is  compactly supported which means that one can find a positive constant $A$ such
that $K$ vanishes outside the interval $[-A, A]$. Thus, for all $n \geq 1$ and all $x \in \dR$,
$$
W_n(x)=\frac{1}{h_{n}}K\Bigl(\frac{x -\wh{\theta}^{\, \prime}_{n-1}X_n}{h_{n}}\Bigr)\rI_{\{|\wh{\theta}^{\, \prime}_{n-1}X_n-x| \leq A h_n \}}.
$$
In addition, the function $f$ is Lipschitz, so it exists a positive constant $C_f$ such that for all $n \geq 1$ 
\begin{equation}
\label{FLIP}
|f(\wh{\Phi}_n)-f(x)| \leq C_f |\wh{\Phi}_n-x| \leq C_f |\wh{\theta}^{\, \prime}_{n-1}X_n-x|.
\end{equation}
Consequently, we obtain from \eqref{FLIP} that for all $x \in \dR$ 
\begin{eqnarray}
|\Delta_n(x)| &\leq& C_f  \sum_{k=1}^{n}W_{k}(x)  |\wh{\theta}^{\, \prime}_{k-1}X_k-x|, \nonumber \\
&\leq & A C_f \sum_{k=1}^{n}h_k W_{k}(x).
\label{MAJDELTA}
\end{eqnarray}
Moreover, via the same lines as in the proof of \eqref{CVGEW1}, we find that
\begin{equation} 
\label{CVGHEW1}
\lim_{n \rightarrow \infty}\frac{1}{n^{1- \alpha}}\sum_{k=1}^{n} h_k\dE[W_{k}(x)|\cF_{k-1}] = \frac{1}{1- \alpha}h(\theta, x)
\hspace{1cm}\text{a.s.}
\end{equation}
Furthermore, denote
$$
M_n^{(\Delta)}(x)=\sum_{k=1}^nh_k\Bigl( W_{k}(x) - \dE[ W_{k}(x) | \cF_{k-1}]\Bigr).
$$
One can observe that $(M_n^{(\Delta)}(x))$ is a square integrable martingale difference sequence with 
bounded increments and predictable quadratic variation given by
\begin{eqnarray*}
<\!M^{(\Delta)}(x)\!>_n&=&\sum_{k=1}^n \dE[(M_k^{(\Delta)}(x) -M_{k-1}^{(\Delta)}(x))^2|\cF_{k-1}] ,\\
&=&\sum_{k=1}^n h_k^2 \Bigl( \dE[W_{k}^2(x)|\cF_{k-1}]-  \dE^2[W_{k}(x)|\cF_{k-1}]\Bigr).
\end{eqnarray*}
Hence, it follows from \eqref{sharpEXPW1} and \eqref{sharpEXPW2} together with 
the almost sure convergence of $h(\wh{\theta}_{n},x)$ to $h(\theta,x)$ and
Toeplitz's lemma that
\begin{equation} 
\label{CvgBracketMD}
\lim_{n\rightarrow \infty} \frac{<\!M^{(\Delta)}(x)\!>_n}{n^{1-\alpha}} = \frac{\nu^2 }{1-\alpha}h(\theta,x)
\hspace{1cm}\text{a.s.}
\end{equation}
Consequently, we obtain from the strong law of large numbers for martingales that 
\begin{equation} 
\label{CVGMD}
\Bigl(M_n^{(\Delta)}(x)\Bigr)^2=\cO\Bigl(n^{1-\alpha}\log n \Bigr) \hspace{1cm}\text{a.s.}
\end{equation}
Then, we infer from the conjunction of \eqref{MAJDELTA}, \eqref{CVGHEW1} and \eqref{CVGMD}
that for all $x \in \dR$
\begin{equation} 
\label{CVGDELTA}
|\Delta_n(x)|=\cO\Bigl(n^{1-\alpha} \Bigr) \hspace{1cm}\text{a.s.}
\end{equation}
The left-hand side of \eqref{DECOQ} is much more difficult to handle. We can use once again the assumption
that the function $f$ is Lipschitz to deduce that it exists a positive constant $C_f$ such that for all $n \geq 1$ 
\begin{equation}
\label{FLIPN}
|f(\wh{\Phi}_n)-f(\Phi_n)| \leq C_f |\pi_n| 
\end{equation} 
where $\pi_n=(\wh{\theta}_{n-1}-\theta)^{\prime}X_n$. Hence, it immediately follows from \eqref{FLIPN} that for all $x \in \dR$ 
\begin{equation}
|\Sigma_n(x)| \leq C_f  \sum_{k=1}^{n}W_{k}(x)  |\pi_k|.
\label{MAJSIGMA1}
\end{equation}
Denote
$$
\cA_n = \Bigl\{ | \wh{\theta}_{n-1}^{\,\prime}X_n -x | \leq A h_n \Bigr\}
\hspace{0.5cm}\text{and}\hspace{0.5cm}
\cB_n = \Bigl\{ | \theta^{\prime}X_n -x | \leq A h_n +b_n \Bigr\}
$$
where $(b_n)$ is a sequence of positive real numbers
which will be explicitely given later.  On the one hand, we immediately
have from the triangle inequality that on the set $\cA_n \cap \cB_n$, 
$$
| \pi_n| \leq 2Ah_n + b_n.
$$
On the other hand, we also have on the set $\cA_n \cap \overline{\cB_n}$,
$$
A h_n + b_n < | \theta^{\prime}X_n -x | \leq | \pi_n | + | \wh{\theta}_{n-1}^{\,\prime}X_n -x | \leq  | \pi_n | + Ah_n
$$
which implies that $| \pi_n | > b_n$. Consequently, we obtain from \eqref{MAJSIGMA1} that
\begin{equation}
|\Sigma_n(x)| \leq 2A C_f  \sum_{k=1}^{n}h_kW_{k}(x) \!+\! C_f  \sum_{k=1}^{n}b_kW_{k}(x)
\!+\! C_f \sum_{k=1}^{n}W_{k}(x)  |\pi_k| \rI_{\{ | \pi_k | > b_k \}}.
\label{MAJSIGMA2}
\end{equation}
We already saw from \eqref{CVGDELTA} that
\begin{equation} 
\label{CVGSIG1}
\sum_{k=1}^{n}h_kW_{k}(x)=\cO\Bigl(n^{1-\alpha} \Bigr) \hspace{1cm}\text{a.s.}
\end{equation}
Moreover, it is assumed that the sequence $(X_{n})$ has a finite moment of order $a>2$ which
ensures that
$$
\sup_{1 \leq k \leq n} || X_k||=o(n^{1/a}) \hspace{1cm}\text{a.s.}
$$
Consequently, we find from Lemma \eqref{LEMLGNSIR} that
\begin{equation}
\label{CVGPIN}
|\pi_n|= o(b_n)\hspace{1cm}\text{a.s.} 
\end{equation}
where we can choose
$$
b_n= n^{1/a}\sqrt{\frac{\log(\log n)}{n}}.
$$
Therefore, we clearly have
\begin{equation} 
\label{CVGSIG2}
\sum_{k=1}^{n}W_{k}(x)  |\pi_k| \rI_{\{ | \pi_k | > b_k \}}< +\infty \hspace{1cm}\text{a.s.}
\end{equation}
Furthermore, it is not hard to see that
$$
\sum_{k=1}^n b_k = \cO\Bigl(n^{1/a}\sqrt{n \log(\log n)} \Bigr).
$$
Hence, via the same lines as in the proof of \eqref{CVGDELTA}, we obtain that
\begin{equation} 
\label{CVGSIG3}
\sum_{k=1}^{n}b_kW_{k}(x)=\cO\Bigl(n^{1/a}\sqrt{n \log(\log n)} \Bigr) \hspace{1cm}\text{a.s.}
\end{equation}
Then, we deduce from the conjunction of \eqref{MAJSIGMA2}, \eqref{CVGSIG1}, \eqref{CVGSIG2}, and
\eqref{CVGSIG3} that
\begin{equation}
|\Sigma_n(x)| = \cO\Bigl(n^{1-\alpha} \Bigr) + \cO\Bigl(n^{1/a}\sqrt{n \log(\log n)} \Bigr) \hspace{1cm}\text{a.s.}
\label{MAJSIGMA3}
\end{equation}
Consequently, we infer from \eqref{CVGDELTA} and \eqref{MAJSIGMA3} that for all $x \in \dR$
\begin{equation} 
\label{CVGQ}
Q_n(x)= \cO\Bigl(n^{1-\alpha} \Bigr) + \cO\Bigl(n^{1/a}\sqrt{n \log(\log n)} \Bigr) \hspace{1cm}\text{a.s.}\hspace{1cm}\text{a.s.}
\end{equation}
Finally, we can conclude from \eqref{MAINDECO} together with \eqref{CVGN}, \eqref{CVGP} and \eqref{CVGQ} that
\begin{equation*} 
\lim_{n\rightarrow \infty}
\wh{f}_{n}(x)=f(x)\hspace{1cm}\text{a.s.}
\end{equation*}
with the almost sure rates of convergence given by \eqref{RATEASCVGFN1} and \eqref{RATEASCVGFN2}, which
completes the proof of Theorem \ref{THMASCVG}.
$\hfill 
\mathbin{\vbox{\hrule\hbox{\vrule height1.5ex \kern.6em
\vrule height1.5ex}\hrule}}$

%%%%%%%%%%%%%%%%%%%%%%%%%%%%%%%%%%%%%%%%%%%%%%%%%%%%%%%%%%%%%%%%%%%%%%%%%%%%%%%%%%%%%%%%%%%%%%%%%%%%%%%%%%

\section*{Appendix B}

\begin{center}
{\small PROOF OF THEOREM \ref{THMCLT}}
\end{center}

\renewcommand{\thesection}{\Alph{section}} 
\renewcommand{\theequation}
{\thesection.\arabic{equation}} \setcounter{section}{2} 
\setcounter{equation}{0}

%%%%%%%%%%%%%%%%%%%%%%%%%%%%%%%%%%%%%%%%%%%%%%%%%%%%%%%%%%%%%%%%%%%%%%%%%%%%%%%%%%%%%%%%%%%%%%%%%%%%%%%%%%

We already saw that $(P_n(x))$ is a square integrable martingale difference sequence with predictable quadratic variation
satisfying
\begin{equation*} 
\lim_{n\rightarrow \infty} \frac{<\!P(x)\!>_n}{n^{1+\alpha}} = \frac{\sigma^2\nu^2 }{1+\alpha}h(\theta,x)
\hspace{1cm}\text{a.s.}
\end{equation*}
In order to establish the asymptotic normality of Theorem \ref{THMCLT}, it is
necessary to prove that the sequence $(P_n(x))$ satisfies the Lindeberg condition, that is for all $\veps >0$,
\begin{equation}
\label{LINDEBERG1}
\cP_n(x)=\frac{1}{n^{1+\alpha}} \sum_{k=1}^{n} \dE \left[ | \Delta P_{k}(x) |^2 \rI_{\{| \Delta P_{k}(x) |  \geq \veps \sqrt{n^{1+\alpha}}\}} | \cF_{k-1} \right] \limp 0
\end{equation}
where $\Delta P_{n}(x)=P_{n}(x)-P_{n-1}(x)$. We have assumed that
the sequence $(\varepsilon_n)$ has a finite conditional moment of order $b>2$ which means that
$$
\sup_{n \geq 0} \dE[| \veps_{n} |^b|\cF_{n-1}] < + \infty \hspace{1cm}\text{a.s.}
$$
Consequently, for all $\veps >0$, we have 
\begin{eqnarray} 
\cP_n(x) & \leq &
\frac{1}{\veps^{b-2} n^c}\sum_{k=1}^{n}\dE[| \Delta P_{k}(x) |^b|\cF_{k-1}], \nonumber \\
& \leq & \frac{1}{\veps^{b-2} n^c}\sum_{k=1}^{n}\dE[ W_{k}^b(x)|\cF_{k-1}] \dE[| \veps_{k} |^b|\cF_{k-1}], \nonumber \\
& \leq & \frac{1}{\veps^{b-2} n^c} \sup_{1 \leq k \leq n} \dE[| \veps_{k} |^b|\cF_{k-1}]\sum_{k=1}^{n}\dE[ W_{k}^b(x)|\cF_{k-1}] 
\label{LINDEBERG2}
\end{eqnarray}
where $c=b(1+\alpha)/2$. In addition, via the same lines as in the proof of \eqref{CVGEW2}, we obtain that
\begin{equation}
\label{CVGEWB}
\lim_{n\rightarrow \infty}
\frac{1}{n^{1+\alpha(b-1)}}\sum _{k=1}^{n} \dE[W_{k}^b(x)\mid \cF_{k-1}]
= \frac{\xi^b}{1+\alpha(b-1)}h(\theta,x)  \hspace{1cm}\text{a.s.}
 \end{equation} 
 where
$$
\xi^b= \int_\dR K^b(x)\, dx.
$$
Therefore, we deduce from \eqref{LINDEBERG1} together with  \eqref{LINDEBERG2} and \eqref{CVGEWB} that,
for all $\veps >0$,
$$
\cP_n(x)= \cO (n^d) \hspace{1cm}\text{a.s.}
$$
where $d=(2-b)(1-\alpha)/2$. We recall that $b>2$ which means that
$d<0$. It ensures that the Lindeberg condition is satisfied. Hence, it follows
from the central limit theorem for martingales given e.g. by Corollary 2.1.10 of \cite{DUF97}
that for all $x \in \dR$,
\begin{equation}
\label{CLTP}
\frac{P_{n}(x)}{\sqrt{n^{1+\alpha}}}\liml \cN\Bigl(0,\frac{\sigma^2\nu^2 }{1+\alpha}h(\theta,x)\Bigr).
\end{equation}
Furthermore, as soon as $a \geq 6$ and $1/3<\alpha <1$, we clearly obtain from
\eqref{CVGQ} that
\begin{equation} 
\label{CVGNEWQ}
\lim_{n\rightarrow \infty} \frac{Q_n(x)}{\sqrt{n^{1+\alpha}}} = 0 \hspace{1cm}\text{a.s.}
\end{equation}
Finally, we find from \eqref{MAINDECO} together with \eqref{CVGN}, \eqref{CLTP}, \eqref{CVGNEWQ} and
Slutsky's lemma that, for all $x \in \dR$,
\begin{equation*}
\sqrt{nh_n}(\wh{f}_{n}(x)-f(x)) \liml \cN\Bigl(0,\frac{\sigma^{2}\nu^2}{(1+\alpha)h(\theta,x)}\Bigr)
\end{equation*}
which acheives the proof of Theorem \ref{THMCLT}.
$\hfill 
\mathbin{\vbox{\hrule\hbox{\vrule height1.5ex \kern.6em
\vrule height1.5ex}\hrule}}$

\ \vspace{-1ex} \\
{\bf Acknowledgements.}                              
The first author would like to thanks Bruno Portier for helpful remarks made on
a preliminary version of the paper.

%%%%%%%%%%%%%%%%%%%%%%%%%%%%%%%%%%%%%%%%%%%%%%%%%%%%%%%%%%%%%%%%%%%%%%%%%%%%%%%%%%%%%%%%%%%%%%%%%%%%%%%%%%

\end{document}